\documentclass[11pt]{amsart}
\usepackage[T1]{fontenc}
\usepackage{amsmath,amssymb,amsthm}
\usepackage[margin=3cm]{geometry}
\usepackage[hidelinks]{hyperref}
\numberwithin{equation}{section}
\newtheorem{theorem}{Theorem}[section]
\newtheorem{proposition}[theorem]{Proposition}
\newtheorem{lemma}[theorem]{Lemma}
\newtheorem{corollary}[theorem]{Corollary}
\newcommand{\R}{\mathbb R}
\newcommand{\cH}{\mathcal H}
\newcommand{\cC}{\mathcal C}
\newcommand{\cX}{\mathcal X}
\newcommand{\cY}{\mathcal Y}
\newcommand{\cQ}{\mathcal Q}
\newcommand{\supp}{\operatorname{supp}}
\newcommand{\dist}{\operatorname{dist}}
\newcommand{\genus}{\operatorname{genus}}
\newcommand{\Id}{\operatorname{Id}}
\newcommand{\spec}{\operatorname{spec}}
\newcommand{\eps}{\varepsilon}
\newcommand{\dd}{\,d}
\newcommand{\norm}[1]{\left\|#1\right\|}
\title{Genus Increase in Closed Mean Curvature Flows}
\author{Jingwen Chen}
\address{Department of Mathematics, Faculty of Science,
University of Macau,\newline
E11, Avenida da Universidade, Taipa,
Macau}
\email{jingwenchen@um.edu.mo}

\begin{document}
\begin{abstract}
\setlength{\emergencystretch}{1em}
We construct closed mean curvature flows with arbitrarily large genus increase in every smooth Riemannian three-manifold, giving counterexamples to Ilmanen's genus-reduction conjecture. We prove that a compact hypersurface with a single isolated conical singularity and a power rate of convergence to its cone admits a mean curvature flow whose forward rescaled limit is a prescribed expander in the tangent space at the singular point, smoothly asymptotic to that cone. Combining this theorem with closed mean curvature flows with prescribed tangent flows \cite{LZ,CLSZ} and the noncompact counterexample \cite{HMW} gives the desired examples.
\end{abstract}
\subjclass[2020]{53E10}
\keywords{Mean curvature flow, conical singularities, self-expanders, genus increase}
\maketitle

\section{Introduction}\label{sec:statements}

In 1995, Ilmanen \cite{IlmanenLectures} conjectured that singularities of mean curvature flow in $\R^3$ should strictly reduce genus, except for neckpinches and shrinking spheres. Recently, Hoffman--Mart\'in--White \cite{HMW} constructed noncompact counterexamples by joining a fixed asymptotically conical shrinker to expanders of arbitrarily large genus smoothly asymptotic to the same cone. Whether such genus increase can occur for flows starting from smooth closed surfaces in $\R^3$ remained open.

We construct closed counterexamples to Ilmanen's conjecture in every smooth Riemannian three-manifold. The flow starts from one fixed smooth connected closed embedded surface, develops a single isolated conical singularity, and admits almost regular continuations with arbitrarily large genus increase. For each continuation, parabolic rescalings about the singularity converge to the corresponding Hoffman--Mart\'in--White flow for negative and positive times. 

The hypersurface dimension is $n\ge2$. We write $\cC\subset\R^{n+1}$ for a cone whose link $\cC\cap S^n$ is a smooth closed embedded hypersurface of $S^n$.

We first prove an existence theorem for mean curvature flows out of isolated conical singularities in $\R^{n+1}$. Assuming that the compact initial hypersurface $M_0$ is asymptotic to a cone $\cC$ at the vertex with the power rate specified in \eqref{eq:trace}, Theorem~\ref{thm:forward} constructs a closed mean curvature flow with any connected, properly embedded expander smoothly asymptotic to $\cC$ as its forward rescaled limit. Theorem~\ref{thm:twosided} joins this outgoing flow to a closed mean curvature flow with a prescribed shrinking tangent flow, constructed in the author's previous joint work with Tang-Kai Lee, Ao Sun, and Xinrui Zhao \cite{CLSZ} (see also \cite{LZ}), and computes the change of genus for surfaces.

A smooth family of hypersurfaces $M^n(t)$ in a Riemannian manifold $(N^{n+1},g)$, $t\in I$, moves by mean curvature if
\[
 (\partial_t x)^\perp=\vec H_g(x),
\]
where $\perp$ denotes the $g$-normal component of the velocity and $\vec H_g$ is the mean curvature vector.
A shrinker is a smooth embedded hypersurface $\Sigma^n\subset \R^{n+1}$ satisfying
\begin{equation*}
  \vec H_\Sigma+\frac{x^\perp}{2}=0.
\end{equation*}
Equivalently, the family of hypersurfaces $(\sqrt{-t}\Sigma)_{t<0}$ is a self-similarly shrinking solution to mean curvature flow. An expander is a smooth embedded hypersurface $E^n\subset \R^{n+1}$ satisfying
\begin{equation} \label{expander:eq}
 \vec H_E - \frac{x^\perp}{2} = 0.
\end{equation}
Equivalently, $E_t=\sqrt t\,E$, $t>0$, is a self-similarly expanding solution to mean curvature flow.

Huisken's monotonicity formula \cite{Huisken} relates singularity formation to shrinkers. The geometry and rigidity of asymptotically conical shrinking ends were studied by Lu Wang \cite{Wang}, and uniqueness of the corresponding multiplicity-one tangent flows was proved by Chodosh--Schulze \cite{CS}. Tang-Kai Lee and Xinrui Zhao \cite{LZ} realized asymptotically conical (AC) shrinkers as singularity models of closed mean curvature flows. In subsequent joint work with them and Ao Sun \cite{CLSZ}, the author extended this result to a broader class of shrinkers and to flows in Riemannian manifolds. These realization arguments use the Wa\.{z}ewski box method, following earlier work of Angenent--Vel\'azquez \cite{AV} and the construction by Stolarski \cite{Stolarski} in Ricci flow. 

To continue a flow through an isolated conical singularity, one must construct a flow out of the singular time-slice. Expanders provide local models for such flows: the expanding flow associated with an AC expander converges to its asymptotic cone smoothly away from the vertex as $t\downarrow0$; see, e.g., \cite{BWspace}. For $2\le n\le6$, Chodosh--Daniels-Holgate--Schulze \cite{CDHS} proved short-time smoothness of the outermost flows from isolated conical singularities and identified their forward rescaled limits with the outermost expanders. Angenent-Daskalopoulos-\v{S}e\v{s}um \cite{ADSsmoothing} constructed a smooth continuation of Vel\'azquez's mean curvature flow \cite{Velazquez} past an isolated Type II singularity in $\R^8$. The weighted analysis and deformation theory of AC expanders were developed by Bernstein-Wang \cite{BWspace,BWdegree}; Deruelle--Schulze \cite{DS} studied relative entropy and generic uniqueness for expanders with vanishing relative entropy.

Topology change along mean curvature flow has been actively studied over the past several decades. White \cite{WhiteTopology} proved that, for level set flows starting from smooth closed surfaces in $\R^3$, the first Betti number of the complement is non-increasing. In particular, the genus along such level set flows is non-increasing between regular times. For innermost and outermost flows from smooth closed surfaces in $\R^3$, Ilmanen's genus-reduction conjecture follows from \cite{CCS,BK}. Colding-Minicozzi-Pedersen \cite{CMP} discussed the relation between singularities of mean curvature flow and topology. In joint work with Pedro Gaspar \cite{ChenGaspar}, the author constructed eternal integral Brakke flows in $S^3$ connecting Clifford tori to equatorial spheres. The author later extended this construction to higher-dimensional spheres \cite{ChenSymmetric,ChenClifford}, connecting Clifford hypersurfaces to equatorial spheres. Adrian Chun-Pong Chu and Ao Sun \cite{ChuSun} studied topology change and used an interpolation argument to construct a genus-one shrinker with entropy lower than that of the Angenent torus. Recently, Diederen \cite{Diederen} constructed mean convex flows in $\R^{p+q}$ that pass through an isolated bubble-sheet singularity from a hypersurface diffeomorphic to $S^{p+q-1}$ to one diffeomorphic to $S^{p-1}\times S^q$.

Let $M_0\subset\R^{n+1}$ be compact, and suppose that $M_0\setminus\{0\}$ is a smooth embedded hypersurface in $\R^{n+1}\setminus\{0\}$. Let $\mathbf n_{\cC}$ be a unit normal on $\cC\setminus\{0\}$. We assume that, near the origin, $M_0$ is a normal graph over $\cC$ with height $h_{\cC}$ satisfying
\begin{equation}\label{eq:trace}
 q\longmapsto q+h_{\cC}(q)\mathbf{n}_{\cC}(q),\qquad
 |\nabla_{\cC}^j h_{\cC}(q)|\le C_j |q|^{1+\alpha_0-j}\quad(j\ge0)
\end{equation}
for some $\alpha_0>0$. This graph describes all of $M_0$ in $B_{r_*}(0)\setminus\{0\}$ for some $r_*>0$.

\begin{theorem}\label{thm:forward}
Let $M_0$ satisfy the assumptions above. Let $E\subset\R^{n+1}$ be a connected smooth complete properly embedded expander smoothly asymptotic with multiplicity one to the same cone $\cC$. There are $T_E>0$ and a family of smooth closed embedded hypersurfaces $M_t\subset\R^{n+1}$, $0<t\le T_E$, moving by mean curvature, such that
\begin{enumerate}
\item As $t \downarrow 0$, $M_t$ converges to $M_0$ in Hausdorff distance and as integral varifolds;
\item the flow extends smoothly to $t=0$ locally on $\R^{n+1}\setminus\{0\}$, with initial surface $M_0\setminus\{0\}$;
\item the rescaled flows converge smoothly on compact subsets of $\R^{n+1} \times (0,\infty)$:
\begin{equation}\label{eq:forwardmodel}
 \lambda^{-1}M_{\lambda^2t}\longrightarrow\sqrt{t}\,E
 \quad\hbox{in }C^\infty\quad(\lambda\downarrow0);
\end{equation}
\item for all small $R$ and $t$, the area satisfies
\begin{equation}\label{eq:areabound}
 \cH^n(M_t\cap B_R)\le C_{E,M_0}(R^n+t^{n/2}).
\end{equation}
\end{enumerate}
\end{theorem}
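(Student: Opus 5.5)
We will build the flow by gluing and a perturbation argument in self-similar variables near the vertex, coupled with standard short-time existence away from it. First, form an approximate solution. Fix a cutoff scale $r(t)=t^{\beta/2}$ with $\beta\in(0,1)$ to be chosen. Inside $B_{2r(t)}$ we take the expanding flow $\sqrt t\,E$. Outside $B_{r(t)}$ we take the smooth flow $\tilde M_t$ of $M_0\setminus\{0\}$ away from the vertex: by \eqref{eq:trace}, $M_0$ is locally a normal graph over $\cC$ whose height is $O(|q|^{1+\alpha_0})$, so on each dyadic annulus $A_\rho=\{\rho<|x|<2\rho\}$ the rescaled surface $\rho^{-1}M_0\cap A_\rho$ is $C\rho^{\alpha_0}$-close to $\cC$ in $C^\infty$. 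Interior estimates (Ecker--Huisken) then give a smooth solution on $A_\rho$ for times $t\le c\rho^2$, still close to $\cC$. On the overlap $r(t)<|x|<2r(t)$, both pieces are normal graphs over $\cC$. For the expander this uses the decay of $E$ toward $\cC$: after rescaling by $\sqrt t$ it is $O(t/|x|^2)$ in $C^\infty$ relative to $|x|$. For $M_0$ the error is $O(|x|^{\alpha_0})$. We interpolate the two heights with a cutoff to get a family $\hat M_t$, which is smooth, closed and embedded for small $t$.

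Next, we measure and correct the error. Passing to rescaled variables $y=x/\sqrt t$ and $\tau=\log t$, the flow $\sqrt t\,\hat M_t$ becomes a family $\hat\Sigma_\tau$ that equals $E$ on $|y|\lesssim e^{\tau(\beta-1)/2}$ up to a small graph. Here the rescaled MCF is $\partial_\tau y^\perp = \vec H-\tfrac{y^\perp}{2}$, whose linearization at $E$ is the expander operator $L_E=\Delta_E+\tfrac12 y\cdot\nabla+|A|^2-\tfrac12$. Its $\tfrac12 y\cdot\nabla$ drift pushes outward, which is favourable in forward time. The gluing error in the transition region is bounded by $t^{1-\beta}+t^{\beta\alpha_0/2}$, which decays like a positive power $e^{\delta\tau}$ as $\tau\to-\infty$. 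We then solve for a normal graph $u$ over $\hat\Sigma_\tau$ (equivalently over $\hat M_t$) with zero data as $t\downarrow0$, by a contraction mapping in weighted parabolic Hölder spaces with norm $\sup e^{-\delta'\tau}|u|$ and weights $\langle y\rangle^{-1}$ on the expander end. Doing this in the original variables, via a fixed point for the graphical MCF over $\hat M_t$ on $(0,T_E]$, avoids issues with $\tau$-dependence of the domain.

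The main obstacle is the linear theory: we need uniform Schauder estimates for $\partial_\tau-L_{\hat\Sigma_\tau}$ on $(-\infty,\log T_E]$ with decay in $\tau$. Unlike the shrinker case, no kernel or unstable-mode issue arises, because we solve forward from $\tau=-\infty$ with zero initial data. The forward heat flow of the expander operator on weighted spaces (Bernstein--Wang) is well posed, and the potential term $|A|^2-\tfrac12$ is compensated by choosing $\delta'$ small and using the weight $e^{-\delta'\tau}$. The constant $-\tfrac12$ together with the drift gives an effective exponential damping only after weighting. My first attempt would be to prove an $L^\infty$ barrier estimate by the maximum principle, using supersolutions of the form $e^{\delta'\tau}\langle y\rangle^{\gamma}$ on the cone-like end and a constant on the compact core. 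We then bootstrap to $C^{2,\alpha}$ using local parabolic Schauder estimates on unit-size rescaled cylinders. Since $|A|^2$ can be large on the core of $E$, we must check that $\langle y\rangle^\gamma$-barriers absorb it for $|y|$ large, and handle the compact core by a short-time energy estimate with $\delta'\ll\delta$.

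Finally, we verify the conclusions. Smoothness and closedness follow from the construction. Item (2) holds because on each $A_\rho$ the solution agrees with the interior flow up to an error vanishing as $t\downarrow0$. Item (3) follows since $u=o(1)$ in $C^\infty_{loc}$ in $y$ as $\tau\to-\infty$, and $\hat\Sigma_\tau\to E$. Hausdorff and varifold convergence in (1) follow from (2) and (3): inside $B_{r(t)}$ the surface lies within $O(r(t))$ of $0$ and has small area. The area bound (4) comes from splitting $B_R$ into $B_{C\sqrt t}$, where the area is $t^{n/2}\mathcal H^n(E\cap B_C)$, and dyadic annuli on which the surface is a graph over $\cC$ with area $\le C\rho^n$, using the Euclidean area growth of $E$ and $\cC$.
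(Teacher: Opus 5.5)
\textbf{There is a genuine gap}, and it sits in the linear theory at the expander core, exactly where you locate the main obstacle.

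Your claim that no kernel or unstable-mode issue arises, because you solve forward from $\tau=-\infty$ with zero data, is false as soon as $E$ is unstable. The theorem does not exclude this. Write $-L_E\psi_i=\mu_i\psi_i$ and suppose $\mu_i<0$.

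\emph{Kernel.} The function $e^{-\mu_i\tau}\psi_i$ solves the homogeneous equation and tends to zero as $\tau\to-\infty$. So zero data at $t=0$ does not determine the solution.

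\emph{Existence.} For a source component $F_i(s)$ of size $e^{\delta s}$ with $\delta<-\mu_i$, the forward Duhamel integral $\int_{-\infty}^{\tau}e^{-\mu_i(\tau-s)}F_i(s)\,ds$ diverges.

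\emph{Barrier.} The same obstruction defeats your barrier. The function $e^{\delta'\tau}\phi$, with $\phi>0$, dominates a source of size $e^{\delta'\tau}\phi$ only if $(\delta'-L_E)\phi\ge c\phi$. For power-type $\phi$, pairing with the positive ground state $\psi_1$ in the weight $e^{r^2/4}$ shows this forces $\delta'>-\mu_1$ (the boundary terms vanish in the relevant range). Choosing $\delta'$ small therefore makes matters worse, not better. An energy estimate on the core cannot help either, because the $\tau$-interval is infinite.

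Nor can you take $\delta'>-\mu_1$. The decay rate is capped by how fast $M_0$ approaches $\cC$, which is governed by $\alpha_0$ and is unrelated to $E$.

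\textbf{Missing idea.} What you need is the two-step construction of Proposition~\ref{prop:innerinverse}.

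First, add a large compactly supported potential $\kappa\ge0$ so that $e^{\beta\tau}\phi$, with $\phi=(1+|x|^2)^{p/2}$, becomes a barrier for $\partial_\tau-L_E+\kappa$. On the end the drift gives $\beta-L_E\phi/\phi=1+O(r^{-2})$, as you anticipated. Solve from $-\infty$ to obtain $u_0$.

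Second, remove the error $\kappa u_0$. Being compactly supported, it lies in the Gaussian-weighted space $W^0_{1/4}(E)$, where $-L_E$ has discrete spectrum. There the modes with $\mu_i>-\beta$ are integrated from $-\infty$. The finitely many modes with $\mu_i<-\beta$ are solved backward with a terminal condition at $\tau=\log T$, using $-\beta\notin\spec(-L_E)$. This is legitimate precisely because these modes decay as $\tau\to-\infty$, so vanishing at $t=0$ is preserved. The Gaussian decay of this correction then yields the weighted H\"older bound uniformly in $T$.

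You are right that no Wa\.{z}ewski-type argument is needed, but the terminal condition is essential.

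\textbf{Two smaller points.}
\begin{enumerate}
\item Interior estimates do not produce a flow of $M_0\setminus\{0\}$ on dyadic annuli, and local solutions on overlapping annuli need not patch together. It is simpler to keep the static $M_0$ outside a fixed ball, as the paper does.
\item You still need to combine the inner inverse with a zero-initial-data inverse on a fixed smoothed hypersurface for $T\ll r_0^2$. The paper does this with cutoffs and a Neumann series; your proposal leaves this step unspecified.
\end{enumerate}
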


\begin{theorem}\label{thm:twosided}
Let $\Sigma\subset\R^{n+1}$ be a connected nonflat complete properly embedded smooth AC shrinker with cone $\cC$. Choose the closed mean curvature flow $M_t^-\subset\R^{n+1}$, $-1\le t<0$, constructed in \cite{CLSZ} and specified in Section~\ref{sec:incoming}. For each $E$ as in Theorem~\ref{thm:forward}, the incoming flow extends to an integral Brakke flow on $[-1,T_E]$, smooth except at $(0,0)$, whose positive-time hypersurfaces are closed and connected and satisfy \eqref{eq:forwardmodel}. Denote the positive time part of the continuation by $M_t^+$. If $n=2$, the associated spacetime track is an almost regular mean curvature flow in the sense of Bamler--Kleiner \cite{BK}, and
\begin{equation}\label{eq:genusjump}
 \genus(M_{t_+}^+)-\genus(M_{t_-}^-)=\genus(E)-\genus(\Sigma),
\end{equation}
for $t_-<0<t_+$ sufficiently close to $0$. 
\end{theorem}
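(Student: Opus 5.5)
The plan is to glue the incoming flow of \cite{CLSZ} to the outgoing flow of Theorem~\ref{thm:forward} at the singular time $t=0$.

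\textbf{Step 1: the singular time-slice.} From the construction in \cite{CLSZ} (Section~\ref{sec:incoming}), the flow $M_t^-$ converges as $t\uparrow0$ smoothly away from the origin to a compact limit $M_0$, with $M_0\setminus\{0\}$ smooth. I will check that $M_0$ satisfies \eqref{eq:trace}. The rescaled flow $(-t)^{-1/2}M_t^-$ converges to $\Sigma$ at a rate $(-t)^{\mu}$ with $\mu>0$; this rate is built into the Wa\.zewski box of \cite{CLSZ}. Pseudolocality in the annular regions $|x|\sim r$ should then give a normal graph over $\cC$ with height $O(r^{1+\alpha_0})$ and the corresponding derivative bounds. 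The reason is that $\Sigma$ itself is asymptotic to $\cC$ at a power rate (Wang \cite{Wang}: the graph over $\cC$ is $O(|x|^{-1})$), and the perturbation decays like $(-t)^\mu\sim r^{2\mu}$ on the scale $r$ at time $t\sim -r^2$. I expect this step to be the main obstacle: one needs uniform interior estimates from the backward time $-r^2$ up to $t=0$ in each annulus, with the correct power.

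\textbf{Step 2: gluing.} Apply Theorem~\ref{thm:forward} to $M_0$ and $E$ to obtain $M_t^+$, $0<t\le T_E$. Define the Brakke flow on $[-1,T_E]$ by concatenating $M^-$, $M_0$ and $M^+$. Away from $(0,0)$, both pieces extend smoothly to $t=0$ with the same initial data $M_0\setminus\{0\}$. By uniqueness of smooth flows with smooth initial data, applied locally, the concatenation is smooth on spacetime minus $(0,0)$. Brakke's inequality across $t=0$ follows from varifold continuity at $t=0$ (item 1 of Theorem~\ref{thm:forward}) and the area bound \eqref{eq:areabound}: these show that no mass concentrates at the origin. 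Since a point has zero $n$-dimensional measure for $n\ge2$, the test-function inequality passes through the singular point. Two further properties:
\begin{itemize}
\item[$\bullet$] \emph{Connectedness.} Positive-time slices are connected: for small $t$, $M_t^+$ agrees with $\sqrt tE$ near the origin, which is connected, and elsewhere it is close to $M_0$, which is connected because it is the limit of connected closed surfaces and $\Sigma$ is connected.
\item[$\bullet$] \emph{Almost regularity for $n=2$.} The singular set is the single point $(0,0)$, and the tangent flows there are the multiplicity-one $\sqrt{-t}\,\Sigma$ by \cite{CS}. Together these give the almost regular structure in the sense of \cite{BK}.
\end{itemize}

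\textbf{Step 3: genus count.} Fix a large $R$ and let $\lambda=\sqrt{|t_\pm|}$. Decompose $M^\pm_{t_\pm}$ into an inner part in $B_{R\lambda}$ and an outer part outside it. By the rescaled convergence, the inner part of $M^-_{t_-}$ is diffeomorphic to $\Sigma\cap B_R$, and the inner part of $M^+_{t_+}$ is diffeomorphic to $E\cap B_R$. For $R$ large, both inner parts have boundary a copy of the link $\cC\cap S^n$, and their genus equals $\genus(\Sigma)$, respectively $\genus(E)$, because both are AC. The outer parts are both smooth graphs over the same piece of $M_0\setminus B_{R\lambda}$, by smooth convergence away from the origin. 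Hence they are diffeomorphic with the same boundary curves. Additivity of genus under gluing along boundary circles (via the Euler characteristic, with the same number of boundary components on each side) then gives \eqref{eq:genusjump}.
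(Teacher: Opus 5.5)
Your overall architecture (power-rate trace at $t=0$, outgoing flow from Theorem~\ref{thm:forward}, concatenation, Euler characteristic count) matches the paper's. However, two conclusions are asserted without the estimates that actually carry them.

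\textbf{Almost regularity.} The definition in \cite{BK} asks for a unit-regular, cyclic integral Brakke flow whose support is the weak set flow. Crucially, it also asks for local spacetime integrability of $r_{\rm loc}^{-2}$, where $r_{\rm loc}$ is a scale below which the flow is smooth with curvature $\lesssim r^{-1}$ and has connected slices in balls. Knowing that $(0,0)$ is the only singular point with a multiplicity-one backward tangent flow does not give this. That tangent flow says nothing about $t>0$. Even combined with \eqref{eq:forwardmodel}, it only controls parabolic regions $|x|\le K\sqrt{|t|}$, not the conical regime $|x|^2\gg|t|$.

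The paper proves $r_{\rm loc}(x,t)\ge c\sqrt{|t|+|x|^2}$ on both sides of $t=0$. The inputs are $|A|\le C\rho^{-1}$ and an embedded normal neighborhood of width $c\rho$: Section~\ref{sec:incoming} for $t<0$, and \eqref{eq:Ngeometry} with \eqref{eq:Xpoint} for $t>0$. The neighborhood excludes extra sheets, and convexity of the squared distance to $x$ in graph coordinates makes $M_s\cap B_{c\rho}(x)$ connected. The area bound $\mu_t(B_R)\le C(R^2+|t|)$ on dyadic annuli then gives $\int_{M_t\cap B_R}(|t|+|x|^2)^{-1}\dd\mu_t\le C(1+\max\{0,\log(R^2/|t|)\})$, which is integrable in $t$. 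Unit regularity (density at $(0,0)$ equals the Gaussian area of the nonflat $\Sigma$, which exceeds $1$) and cyclicity must also be checked, not assumed.

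\textbf{Genus count and Step~1.} Smooth convergence away from the origin only identifies $M^\pm_{t_\pm}\setminus B_r$ with $M_0\setminus B_r$ for a \emph{fixed} $r$, once $|t_\pm|$ is small depending on $r$. It gives no control on $R\sqrt{|t_\pm|}\le|x|\le r$, so your claim that the outer parts are graphs over $M_0\setminus B_{R\lambda}$ does not follow. A priori, topology could sit in this neck region, and your connectedness argument needs the same control. The paper cuts at a fixed $r$ and shows the neck is a small normal graph over $\cC$, hence annuli transverse to the spheres. For $t>0$ this comes from the cutoff structure of $N_t$ together with $|v|/|x|+|\nabla v|\le CBR^{a-1}t^{a/2}$. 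For $t<0$ it comes from \eqref{eq:incominggraph} with \eqref{eq:incomingpower}.

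That weighted bound is also the mechanism missing from your Step~1. It holds for all $\tau\ge0$, and on $|x|\asymp R$ its time factors cancel: $\sqrt{-t}\,(R/\sqrt{-t})^{1+2\lambda_*}(-t)^{\lambda_*}=R^{1+2\lambda_*}$, uniformly for $-cR^2<t<0$. So only interior estimates for a difference of graphs are needed, with no forward propagation. Propagating from $t\approx-cr^2$ by pseudolocality yields only curvature bounds, i.e.\ displacements of order $cr$, not $O(r^{1+\alpha_0})$.

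To apply Theorem~\ref{thm:forward} you also need three further facts:
\begin{itemize}
\item $M_0\setminus\{0\}$ is embedded; ordered sheets could touch tangentially in the limit, which the paper excludes by a Harnack inequality for their positive difference.
\item Near $0$ the flow coincides with the transplanted graph, so that \eqref{eq:trace} describes all of $M_0$ there.
\item Smoothness off $(0,0)$ holds for the right reason. It is not a local uniqueness statement, which fails without boundary data. Rather, both one-sided flows extend smoothly to $t=0$ with the same trace, and the graphical equation determines every time derivative at $t=0$ from it.
\end{itemize}
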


Combining Theorem~\ref{thm:twosided} with the shrinkers in \cite{HMWshrink} and the expanders in \cite{HMW} gives closed counterexamples to Ilmanen's genus-reduction conjecture.

\begin{corollary}\label{cor:closed}
There is a smooth connected closed initial surface in $\R^3$ whose smooth MCF reaches a single isolated conical singularity and admits almost regular integral Brakke continuations with arbitrarily large genus increase.
\end{corollary}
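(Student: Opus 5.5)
The corollary is a direct application of Theorem~\ref{thm:twosided} with $n=2$. Everything reduces to choosing one shrinker $\Sigma$ and a sequence of expanders $E_k$ that share its asymptotic cone, with $\genus(E_k)\to\infty$. For the shrinker I will use a connected, nonflat, complete, properly embedded AC shrinker from \cite{HMWshrink}; this is the fixed shrinker used in the noncompact counterexamples of \cite{HMW}. Let $\cC$ denote its asymptotic cone. The link of $\cC$ is a smooth embedded curve system in $S^2$, so the standing assumptions on $\cC$ hold. Next, \cite{HMW} supplies, for each $k$, a connected smooth complete properly embedded expander $E_k$ that is smoothly asymptotic to $\cC$ with multiplicity one and has genus at least $k$. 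This is exactly the hypothesis of Theorem~\ref{thm:forward} for each $E_k$. I must check that the HMW expanders are connected and multiplicity one; if their construction produces several components, I would pass to the component asymptotic to $\cC$ or cite their stated properties.

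Fix the closed incoming flow $M_t^-$, $-1\le t<0$, from \cite{CLSZ} as specified in Section~\ref{sec:incoming}. Its initial surface $M_{-1}^-$ is one fixed smooth connected closed embedded surface in $\R^3$. Its smooth flow reaches a single isolated singularity at $(0,0)$, with tangent flow $\sqrt{-t}\,\Sigma$, and the singular time-slice is conical with cone $\cC$. Since this incoming flow does not depend on $k$, the initial surface is the same for every continuation. Theorem~\ref{thm:twosided} applied to $E=E_k$ produces an integral Brakke continuation on $[-1,T_{E_k}]$. It is smooth away from $(0,0)$, its positive-time slices are closed and connected, and for $n=2$ it is almost regular in the sense of Bamler--Kleiner \cite{BK}. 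The genus formula \eqref{eq:genusjump} then gives
\[
\genus(M^+_{t_+})-\genus(M^-_{t_-})=\genus(E_k)-\genus(\Sigma)\ge k-\genus(\Sigma),
\]
which tends to $\infty$ because $\genus(\Sigma)$ is fixed and finite.

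The only real obstacle is compatibility between the inputs. The shrinker has to satisfy the hypotheses required in \cite{CLSZ}: it must be AC and nonflat, and any extra symmetry or nondegeneracy conditions must also hold. The expanders of \cite{HMW} must be asymptotic to the same cone $\cC$ in the smooth sense required by Theorem~\ref{thm:forward}. I would verify this directly from the statements in \cite{HMW}, where the expanders are constructed to match the fixed shrinker's cone. It is also worth noting that the corollary asks only for genus increase at one singular time, and \eqref{eq:genusjump} delivers exactly that for times $t_\pm$ close to $0$.
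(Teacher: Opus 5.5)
Your proposal is correct and takes the same route as the paper. You fix the AC shrinker, and with it the single incoming flow from \cite{CLSZ}, which does not depend on the expander. You then apply Theorem~\ref{thm:twosided} to each expander of arbitrarily large genus asymptotic to the same cone, and read off the genus increase from \eqref{eq:genusjump}. The compatibility conditions you flag (common cone, connectedness, multiplicity one, nonflatness) are exactly the inputs the paper takes from \cite[Theorem 13.1]{HMW}.
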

\begin{proof}
By \cite[Theorem 13.1]{HMW}, there are an AC shrinker and expanders of arbitrarily large genus, all smoothly asymptotic to the same cone. Fix this shrinker and apply Theorem~\ref{thm:twosided} to each of these expanders. Formula \eqref{eq:genusjump} then gives arbitrarily large genus increase.
\end{proof}

Theorems~\ref{thm:forward} and~\ref{thm:twosided} extend to Riemannian ambient manifolds as follows.

\begin{theorem}\label{thm:riemannian}
Let $(N^{n+1},g)$ be a smooth Riemannian manifold and $\mathfrak p\in N$. Fix normal coordinates on a ball $U$ centered at $\mathfrak p$, identifying $(T_{\mathfrak p}N,g_{\mathfrak p})$ with $\R^{n+1}$. Suppose that $M_0\subset N$ is compact, $M_0\setminus\{\mathfrak p\}$ is a smooth embedded hypersurface without boundary, and its normal-coordinate image near $\mathfrak p$ satisfies \eqref{eq:trace}. For every connected, smooth, complete, properly embedded expander $E\subset T_{\mathfrak p}N$ smoothly asymptotic with multiplicity one to $\cC$, the conclusions of Theorem~\ref{thm:forward} hold with $\R^{n+1}$ and $0$ replaced by $N$ and $\mathfrak p$, respectively, and with \eqref{eq:forwardmodel} interpreted as
\begin{equation}\label{eq:riemannianmodel}
 \lambda^{-1}\exp_{\mathfrak p}^{-1}(M_{\lambda^2t}\cap U)
 \longrightarrow\sqrt t\,E
 \quad\hbox{locally smoothly for }t>0.
\end{equation}
The area estimate uses $\cH_g^n$ and $B_R^g(\mathfrak p)$, with a constant depending on $E,M_0,g$.

For a shrinker $\Sigma\subset T_{\mathfrak p}N$ as in Theorem~\ref{thm:twosided}, that theorem also holds for its Riemannian realization in \cite{CLSZ}, with singular point $(\mathfrak p,0)$ and continuation interval $[-T_-,T_E]$ for some $T_->0$. The incoming flow and $T_-$ can be fixed independently of $E$. For $n=2$, almost regularity is understood in the analogous Riemannian sense, with Definitions~2.4 and~2.6 of \cite{BK} formulated using geodesic balls and the metric $g$.
\end{theorem}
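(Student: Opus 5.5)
The proof of Theorem~\ref{thm:riemannian} will rerun the Euclidean constructions of Theorems~\ref{thm:forward} and~\ref{thm:twosided} in normal coordinates at $\mathfrak p$. The metric perturbation will be treated as a lower-order error that decays under parabolic rescaling. In normal coordinates $g_{ij}(x)=\delta_{ij}+O(|x|^2)$ with all derivatives controlled. Rescaling by $\lambda$ gives metrics $g_\lambda(y)=g(\lambda y)$ on $\lambda^{-1}U$, and these satisfy $|\partial^k(g_\lambda-\delta)|\le C_k\lambda^2|y|^{2-k}$ on $|y|\lesssim\lambda^{-1}$. Mean curvature flow with respect to $g$ therefore becomes, at scale $\lambda$, Euclidean MCF plus a perturbation $\mathcal E_\lambda$. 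This perturbation is quadratic in $\lambda|y|$ times bounded geometric quantities.

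\textbf{Step 1: the forward construction.} First the Euclidean proof of Theorem~\ref{thm:forward} is repeated. Near the vertex I build an approximate solution by gluing the expanding model $\sqrt t\,E$, written in normal coordinates, to the smooth flow of $M_0\setminus\{\mathfrak p\}$ on an annular region $\sqrt t\ll|x|\ll1$. Both pieces are graphs over $\cC$ there, with decay $|x|^{1+\alpha_0}$ and $t/|x|$ respectively. The smooth flow away from $\mathfrak p$ exists in $(N,g)$ by standard short-time theory for smooth initial data with bounded geometry on compact subsets of $N\setminus\{\mathfrak p\}$. The curvature of $g$ adds an error of order $|x|^2$ in the self-similar variables. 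This is strictly better than the $|x|^{\alpha_0}$-type error already absorbed in the Euclidean case, after possibly shrinking $\alpha_0$ to $\min(\alpha_0,1)$. Consequently the same weighted Hölder spaces, fixed-point or barrier argument, and linear estimates for the expander's drift Laplacian (from \cite{BWspace}) close. The constants now depend on $g$ through finitely many derivatives on $U$.

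\textbf{Step 2: the conclusions.} Convergence \eqref{eq:riemannianmodel} follows because $g_\lambda\to\delta$ in $C^\infty_{loc}$ and the perturbation vanishes as $\lambda\downarrow0$. Hausdorff and varifold convergence to $M_0$ is local, and the area bound uses $\cH^n_g\le(1+CR^2)\cH^n$ on $B^g_R$. Together with comparability of geodesic and coordinate balls, this gives the estimate with a constant depending on $g$.

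\textbf{Step 3: the two-sided statement.} Here I take the Riemannian incoming flow of \cite{CLSZ}, defined on $[-T_-,0)$. Its existence and its rate of convergence to the shrinker cone give a time-zero slice satisfying \eqref{eq:trace}, exactly as in Section~\ref{sec:incoming}. Since that construction never refers to $E$, the flow and $T_-$ are independent of $E$. Step 1 applied to this slice gives the continuation, and the two pieces are glued as Brakke flows, as in the Euclidean case. Brakke's inequality is local, and both pieces are smooth away from $(\mathfrak p,0)$ with matching time-zero slice and area bounds. For $n=2$, the Bamler--Kleiner almost regularity conditions are checked on geodesic balls. Near $\mathfrak p$, rescaled balls become Euclidean in the limit, so the Euclidean verification transfers up to $o(1)$ errors. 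The genus computation is purely topological: it uses the models $\Sigma$ and $E$ at small scale together with smoothness on an annulus, so it is unchanged.

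\textbf{Main obstacle.} The hardest step is Step 1, namely the uniform linear estimates for the perturbed operator at the scale $|x|\sim1$, where $\lambda|y|$ is not small. I would handle this by splitting into two regions. In the inner region $|x|\le\delta$, the metric error $O(\delta^2)$ is absorbed perturbatively. In the outer region, only smooth Riemannian MCF with compact-support estimates is needed, and $\delta$ is chosen small depending on $g$.
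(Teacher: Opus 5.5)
Your plan follows the paper's proof. In normal coordinates the metric produces a forcing term $\phi_0=O(\rho)$ and coefficient errors $O(\rho^2)\nabla^2+O(\rho)\nabla+O(1)$, Ricci term included. These are errors of relative size $O(\rho^2)$ at the self-similar scale, and, like the cone error $\rho^{\alpha_0}$, they are absorbed in the inner parametrix by shrinking the gluing radius $r_0$; the paper's bound is $CC_i(r_0^{\alpha_0}+r_0^2)+C(C_i+C_o)(\sqrt\eps+\eps)$. The outer inverse is built directly for the Riemannian operator, where no smallness is needed.

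Two points in your sketch need correcting, though neither changes the strategy.

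First, the outer piece of the approximate solution should be the static $M_0$, not a ``smooth flow of $M_0\setminus\{\mathfrak p\}$''. Standard short-time theory does not supply such a flow: $|A_{M_0}|\sim r^{-1}$ near $\mathfrak p$, and restricting to compact subsets of $N\setminus\{\mathfrak p\}$ creates boundary that would need boundary data. No such flow is needed, because the defect of the static reference surface has $\cY_T$-norm at most $C_Fr_0^{-a}$. The fixed-point correction with $\norm{v}_{\cX_T}\le B_0r_0^{-a}$ removes it.

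Second, the incoming side is not exactly as in Section~\ref{sec:incoming}. Since $\sqrt{-t}\,\Sigma$ is no longer a $g$-flow, the graph difference $w=u-v$ on the annulus $r\asymp R$ satisfies an equation forced by $\phi_0$. Its contribution $R^3\le R^{1+\alpha_0}$ still yields \eqref{eq:trace}. The Euclidean interior estimates of \cite{EH} also have to be replaced. The paper does this with the pseudolocality theorem of \cite{CY}, after extending the coordinate metric to one of bounded geometry.
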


Applying Theorem~\ref{thm:riemannian} to the fixed shrinker and expanders of arbitrarily large genus in \cite{HMW} and using \eqref{eq:genusjump}, we obtain the following corollary.

\begin{corollary}\label{cor:riemannian}
Every smooth Riemannian three-manifold contains a smooth connected closed initial surface whose mean curvature flow develops a single isolated conical singularity and admits almost regular integral Brakke continuations with arbitrarily large genus increase.
\end{corollary}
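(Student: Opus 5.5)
The statement to prove is Corollary~\ref{cor:riemannian}. I will deduce it from Theorem~\ref{thm:riemannian} exactly as Corollary~\ref{cor:closed} was deduced from Theorem~\ref{thm:twosided}.

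Let $(N^3,g)$ be any smooth Riemannian three-manifold and fix a point $\mathfrak p\in N$. Choose normal coordinates on a small ball $U$ about $\mathfrak p$, identifying $(T_{\mathfrak p}N,g_{\mathfrak p})$ with $\R^3$. By \cite[Theorem 13.1]{HMW}, there are:
\begin{itemize}
\item a connected nonflat AC shrinker $\Sigma\subset\R^3$ with cone $\cC$;
\item a sequence of connected, smooth, complete, properly embedded expanders $E_k$, each smoothly asymptotic with multiplicity one to the same cone $\cC$, with $\genus(E_k)\to\infty$.
\end{itemize}
I will transplant $\Sigma$ into $T_{\mathfrak p}N$ via the normal-coordinate identification.

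\textbf{Step 1: the incoming flow.} The Riemannian part of Theorem~\ref{thm:riemannian} supplies the realization of $\Sigma$ in \cite{CLSZ}. This is a closed mean curvature flow $M^-_t\subset N$, $-T_-\le t<0$, starting from a smooth connected closed surface $M^-_{-T_-}$. It develops a singularity at $(\mathfrak p,0)$ whose tangent flow is $\sqrt{-t}\,\Sigma$. Theorem~\ref{thm:riemannian} states that the incoming flow and $T_-$ can be fixed independently of the expander. So the initial surface $M^-_{-T_-}$ is a single surface, the same for every $k$. The singularity is a single isolated conical one, since the flow is smooth away from $(\mathfrak p,0)$.

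\textbf{Step 2: continuations.} For each $k$, apply Theorem~\ref{thm:riemannian} with $E=E_k$. This gives an integral Brakke flow on $[-T_-,T_{E_k}]$ with the following properties:
\begin{itemize}
\item it is smooth except at $(\mathfrak p,0)$;
\item its positive-time slices $M^{+,k}_t$ are closed and connected;
\item since $n=2$, it is almost regular in the Riemannian sense of \cite{BK}.
\end{itemize}

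\textbf{Step 3: genus count.} The Riemannian version of \eqref{eq:genusjump} gives
\[
\genus(M^{+,k}_{t_+})-\genus(M^-_{t_-})=\genus(E_k)-\genus(\Sigma)
\]
for $t_-<0<t_+$ close to $0$. For $t_-<0$ close to $0$, the surface $M^-_{t_-}$ is smooth and independent of $k$, and its genus equals that of the initial surface. Indeed, genus is constant along a smooth flow of closed surfaces, because the slices are isotopic. Since $\genus(\Sigma)$ is fixed and $\genus(E_k)\to\infty$, the genus increase across the singular time is unbounded in $k$.

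The only point requiring care is applicability. Theorem~\ref{thm:riemannian} requires the time-zero slice near $\mathfrak p$ to satisfy \eqref{eq:trace}. That decay is part of how the incoming flow is specified in Section~\ref{sec:incoming}, and it is already built into the statement of Theorem~\ref{thm:riemannian}, so I invoke it directly. I therefore expect no substantial obstacle: all the analysis is contained in Theorem~\ref{thm:riemannian}, and the corollary is purely a matter of bookkeeping.
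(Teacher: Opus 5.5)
Your proposal is correct and follows the paper's own one-line argument. You fix the Hoffman--Mart\'in--White shrinker and its Riemannian realization from \cite{CLSZ}, which Theorem~\ref{thm:riemannian} says can be chosen independently of $E$; you then apply Theorem~\ref{thm:riemannian} to each of the HMW expanders of unbounded genus and read off the genus increase from \eqref{eq:genusjump}, and your added remark that the genus of $M^-_{t_-}$ equals that of the initial surface, by isotopy along the smooth incoming flow, is a correct supplement.
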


\subsection{Proof ideas}

To prove Theorem~\ref{thm:forward}, we combine the normal graph functions of $\sqrt t\,E$ and $M_0$ over $\cC$, using a cutoff at radius comparable to $\sqrt{t}$ and another cutoff at a fixed radius, keeping the expander near the origin and $M_0$ outside a fixed ball. We then solve the MCF equation for a normal graph over the resulting hypersurfaces $N_t$, with graph height $o(\sqrt t)$; hence the forward rescaled limit is the prescribed expander.

The main step is to construct a right inverse for the linearized equation with a bound independent of the final time. Near the origin, the change of variables $\tau=\log t$ gives the operator $\partial_\tau-L_E$ on the fixed expander. Adding a compactly supported zeroth-order term gives a barrier for this equation. We remove that term using the discrete spectrum of $-L_E$, following \cite{CM,ChenAncient}. For a chosen exponent $\beta>0$, the components with eigenvalues greater than $-\beta$ are integrated from $-\infty$; the finitely many components with eigenvalues less than $-\beta$ are solved with a terminal condition. We combine this inverse with an inverse having zero initial data on a fixed closed hypersurface obtained by smoothing $M_0$ near the origin. Taking the time interval small compared with the square of the gluing radius makes the cutoff errors small. The contraction mapping theorem then gives the outgoing flow.

For Theorem~\ref{thm:twosided}, we take the incoming closed mean curvature flow from \cite{CLSZ}. Its weighted height estimate gives the power rate \eqref{eq:trace} for the time-zero limit. The incoming and outgoing flows converge to the same limit smoothly away from the origin and with convergence of total area. This allows us to join their weak flow identities across time zero. For surfaces, the parts of the incoming and outgoing surfaces outside a small ball about the origin are diffeomorphic. The change of genus is therefore $\genus(E)-\genus(\Sigma)$.

In the author's previous joint work \cite{CLSZ}, we used local coordinates to write mean curvature flow in a Riemannian manifold as a forced mean curvature flow in Euclidean space and estimated the forcing terms. We use the same approach to extend the results above to the Riemannian case.

We organize the paper as follows. In Section~\ref{sec:preliminary}, we recall the background and specify the weighted norms. In Section~\ref{sec:forward}, we prove Theorem~\ref{thm:forward}. In Section~\ref{sec:incoming}, we obtain \eqref{eq:trace} from the estimates in \cite{CLSZ}, and in Section~\ref{sec:weak}, we prove Theorem~\ref{thm:twosided}. In Section~\ref{sec:riemannian}, we treat the Riemannian case.

\subsection*{Acknowledgment}
The author is supported by the University of Macau Research Grant (MYRG-\allowbreak SRG2026-\allowbreak 00039-\allowbreak FSC).

\subsection*{AI declaration}

This work builds on the author's previous joint work \cite{CLSZ}. Before using AI, the author had developed the main ideas and proof strategy: constructing a flow out of a singularity with a prescribed expander to obtain genus increase across the singularity, gluing scaled copies of the expander to the initial hypersurface, and carrying out the perturbation argument. The use of spectral decomposition and the contraction mapping theorem was inspired by the work of Choi and Mantoulidis \cite{CM}, and the analysis of AC expanders was inspired by the work of Bernstein-Wang \cite{BWspace,BWdegree} and Letian Chen \cite{ChenAncient}. The extension to Riemannian manifolds was based on ideas from the author's previous joint work \cite{CLSZ}.

The author explained these ideas to ChatGPT 6 Pro and used it mainly to help construct the global right inverse by combining the inner and outer right inverses and estimating the resulting cutoff errors. ChatGPT was also used to assist with drafting the manuscript, literature searches, calculations, and checking the arguments for possible gaps. The author checked and revised these arguments and takes full responsibility for the mathematical content and final exposition.

\section{Preliminaries}\label{sec:preliminary}

\subsection{Asymptotically conical expanders}\label{sec:AC}\label{sec:preliminary-graphs}

Write $r=|x|$.

We say that a smooth embedded hypersurface $E\subset\R^{n+1}$ is smoothly asymptotically conical (AC) with cone $\cC$ if $\lambda E\to\cC$ smoothly with multiplicity one on compact subsets of $\R^{n+1}\setminus\{0\}$ as $\lambda\downarrow0$; see \cite{BWspace}. For $n=2$, the conical ends are annuli and are transverse to sufficiently large spheres; this remains true under a normal graph of height $v$ with $|v|/r+|\nabla v|$ sufficiently small. Thus cutting off these ends preserves the genus. We set $\genus(E)=\genus(E\cap\overline B_R)$ for all sufficiently large $R$, and similarly for $\Sigma$.

For the rest of this section, let $E$ be a connected, complete, properly embedded smooth AC expander with cone $\cC$. Expanders are stationary solutions of the rescaled mean curvature flow introduced below.

Let $F:M\times(0,T]\to\R^{n+1}$ be a smooth mean curvature flow. Set $\tau=\log t$ and define
\begin{equation*}
 \widetilde F(\cdot,\tau)=e^{-\tau/2}F(\cdot,e^\tau),
 \qquad -\infty<\tau\le\log T.
\end{equation*}
Then $\widetilde F$ satisfies the rescaled mean curvature flow equation
\begin{equation}\label{eq:rescaledMCF}
 (\partial_\tau\widetilde F)^\perp
 =\vec H_{\widetilde F}-\frac{\widetilde F^\perp}{2}.
\end{equation}

Write a nearby hypersurface as the normal graph $\{x+u(x)\mathbf n_E(x):x\in E\}$. Linearizing \eqref{eq:rescaledMCF} at $E$ gives $\partial_\tau u=L_Eu$, where the stability operator is
\begin{equation*}
 L_E=\Delta_E+\tfrac12 x\cdot\nabla_E+|A_E|^2-\tfrac12.
\end{equation*}

Following \cite{BWdegree}, for $l=0,2$, let $W^l_{1/4}(E)$ consist of functions in $W_{\mathrm{loc}}^{l,2}(E)$ with finite norm
\begin{equation*}
 \norm{f}_{l}^{W,(1/4)}=\left(\int_E\sum_{j=0}^{l}|\nabla_E^j f|^2e^{r^2/4}\dd\cH^n\right)^{1/2}.
\end{equation*}
By \cite[Lemma 4.1]{BWdegree}, $-L_E$ is self-adjoint in $W^0_{1/4}(E)$ with domain $W^2_{1/4}(E)$ and has a smooth orthonormal eigenbasis $\{\psi_i\}_{i=1}^\infty$ satisfying
\begin{equation*}
 -L_E\psi_i=\mu_i\psi_i,\qquad \mu_1<\mu_2\le\cdots\longrightarrow+\infty.
\end{equation*}
Only finitely many $\mu_i$ are negative. The estimate in the proof of this lemma, together with the closed graph theorem, shows that $\norm{f}_{2}^{W,(1/4)}$ is equivalent to $\norm{f}_{0}^{W,(1/4)}+\norm{L_Ef}_{0}^{W,(1/4)}$.

Choose the unit normal on $\cC$ to be the asymptotic limit of $\mathbf n_E$. For a fixed large $R_E$, write $E_t$ as the normal graph of $e(q,t)$ for $q\in\cC$ with $r=|q|>R_E\sqrt t$; its time derivatives are taken at fixed $q$. The proof of \cite[Proposition 3.3]{BWcompact}, in particular estimate (3.1) there, gives uniform local graphical bounds up to $t=0$ at every finite spatial order. The graphical MCF equation then gives smooth extension to $t=0$, with $e(\cdot,0)=0$. Parabolic scaling on annuli gives the mixed derivative estimates below; integrating $\partial_t e$ from zero gives the height estimates:
\begin{equation}\label{eq:eheight}
 |\nabla_{\cC}^j e|\le C_j t r^{-1-j},\qquad
 |\nabla_{\cC}^j\partial_t^m e|\le C_{jm}r^{1-j-2m},\quad m\ge1.
\end{equation}

\subsection{Weighted parabolic norms}\label{sec:weighted-norms}

Fix $0<\alpha<1$. We use the parabolic H\"older norms $C_P^{k,\alpha}$ of \cite{ChenAncient}.

Choose $0<a<\min\{\alpha_0,1\}$ and $\beta=a/2$ so that $-\beta\notin\spec(-L_E)$. Set $p=a-1$ and $\rho(x,t)=\sqrt{|t|+r^2}$.

The curvature may grow like $t^{-1/2}$, so we apply local Schauder estimates at spatial scale $\sqrt{t_0}$ and time scale $t_0$, following the argument for the expanding flow in \cite[proof of Lemma 5.4]{BWspace}. The normal graph construction and geometric estimates in Subsection~\ref{sec:reference} give uniform coordinate and coefficient bounds after this scaling.

Fix $0<\theta<1/8$ sufficiently small, independently of $r_0,T$. On $N_t$, constructed in Subsection~\ref{sec:reference}, we choose geodesic coordinates centered at $x_0$ on $N_{t_0}$ and extend them in time by a normal parametrization. Following the convention of \cite[equation (7.1)]{HS}, we define $Q_\theta(x_0,t_0)$ in these coordinates to be the backward parabolic cylinder $B_{\theta\sqrt{t_0}}(0)\times[t_0-\theta^2t_0,t_0]$. Its rescaled coordinate domain is $Q_\theta=B_\theta\times[-\theta^2,0]$. The same convention applies to $Q_{2\theta}$, to $E_t$, and to the fixed hypersurface used in Subsection~\ref{sec:outer}. Unless otherwise stated, time derivatives of functions on evolving hypersurfaces are taken with respect to normal parametrizations. For the cone graph function $e(q,t)$, however, $\partial_t$ is taken with $q$ fixed, as specified above. On these cylinders, $\rho$ is uniformly comparable to its value at $(x_0,t_0)$.

For the graph height $v$ and the given function $F$ in the linear equation, use the spaces $\cX_T,\cY_T$ with norms
\begin{align}
 \norm{v}_{\cX_T}
 &=\sup_{(x_0,t_0)}
  \frac{\norm{v}_{C_P^{2,\alpha}(Q_\theta(x_0,t_0))}}
    {t_0\rho(x_0,t_0)^p},\label{eq:Xnorm}\\
 \norm{F}_{\cY_T}
 &=\sup_{(x_0,t_0)}
  \frac{\norm{F}_{C_P^{0,\alpha}(Q_\theta(x_0,t_0))}}
    {\rho(x_0,t_0)^p}.\label{eq:Ynorm}
\end{align}
Here the local norms are the standard $C_P^{k,\alpha}$ norms in the rescaled coordinates on $Q_\theta$; the suprema are over $0<t_0\le T$ and $x_0\in N_{t_0}$. The spaces $\cX_T,\cY_T$ are Banach. Under these coordinate bounds, replacing $Q_\theta$ by $Q_{2\theta}$ gives equivalent norms. If $\norm v_{\cX_T}\le B$, then
\begin{equation}\label{eq:Xpoint}
 |v|\le CBt\rho^p,\quad
 |\nabla v|\le CB\sqrt t\rho^p,\quad
 |\nabla^2v|+|\partial_t v|\le CB\rho^p.
\end{equation}
All derivative estimates below include the corresponding scaled parabolic H\"older bounds in these coordinates, with geometric quantities written in local orthonormal frames. We use interior parabolic Schauder estimates \cite{CLSZ,Krylov}, combined with local boundedness estimates \cite{Lieberman} when the solution is controlled in $L^2$.

Let $u=u(x,\tau)$ and $F=F(x,\tau)$ be functions on the fixed expander $E$, for $-\infty<\tau\le\log T$. Under the change of variables $t=e^\tau$ and $y=\sqrt t\,x$, we apply \eqref{eq:Xnorm}--\eqref{eq:Ynorm} on $E_t$ to the transformed functions of $(y,t)$:
\begin{equation}\label{eq:nativenorms}
 \begin{aligned}
 \norm u_{\cX_E^\beta}
 &:=\norm{\sqrt t\,u(y/\sqrt t,\log t)}_{\cX_T},\\
 \norm F_{\cY_E^\beta}
 &:=\norm{t^{-1/2}F(y/\sqrt t,\log t)}_{\cY_T}.
 \end{aligned}
\end{equation}

\section{Flow from a conical hypersurface}\label{sec:forward}

\subsection{Construction of \texorpdfstring{$N_t$}{N\_t}}\label{sec:reference}

We join $E_t$ to $M_0$ by combining their normal graph heights over $\cC$ using two cutoffs. Choose smooth cutoffs $\chi,\zeta$ with $\chi=0$ on $[0,2R_E]$, $\chi=1$ on $[3R_E,\infty)$, $\zeta=1$ on $[0,5]$, and $\zeta=0$ on $[6,\infty)$. Take $r_0$ small with $8r_0<r_*$, and fix $\eps_*>0$ small enough that $3R_E\sqrt{\eps_*}<1/4$. For $0<t\le T\le\eps_*r_0^2$, use the following graph over $\cC$, with $r=|q|$:
\begin{equation*}
 q\longmapsto q+
 \left[\chi(r/\sqrt t)h_{\cC}(q)+\zeta(r/r_0)e(q,t)\right]
 \mathbf n_{\cC}(q),\qquad q\in\cC,\quad R_E\sqrt t<r<7r_0.
\end{equation*}
The displayed graph agrees with $E_t$ for $R_E\sqrt t<|q|\le2R_E\sqrt t$ and with $M_0$ for $6r_0\le|q|<7r_0$. Together with the unchanged portions of $E_t$ and $M_0$, it defines $N_t$.

Normal projection gives a graph height $h$ over $E_t$ on $r<4r_0$ and a graph height $k$ over $M_0$ on $r>r_0/2$; see \cite{BWspace}. Differentiating the cutoff formula and using \eqref{eq:trace} and \eqref{eq:eheight} gives
\begin{align}
 |\nabla^j\partial_t^m h|&\le C_{jm}\rho^{1+\alpha_0-j-2m},
    \qquad j,m\ge0,\label{eq:hjets}\\
 |\nabla^j k|&\le C_jt r^{-1-j},\label{eq:kjets1}\\
 |\nabla^j\partial_t^m k|&\le C_{jm}r^{1-j-2m},\qquad m\ge1.
    \label{eq:kjets2}
\end{align}
For small $r_0$ and $\eps_*$, these are small $C^1$ normal graphs at the corresponding scales. Thus $N_t$ is smooth, closed and embedded. Taking $r_0$ smaller if necessary gives $N_t\setminus B_{8r_0}=M_0\setminus B_{8r_0}$.

Let $V_N=\langle\partial_tX,\mathbf n_N\rangle$ be the scalar normal velocity of $N_t$, where $X(\cdot,t)$ is a smooth parametrization of $N_t$, and set $F_N=\langle\vec H_N,\mathbf n_N\rangle-V_N$. Since $E_t$ moves by mean curvature, the normal graph formulas in \cite{CDHS} and the preceding estimates give
\begin{align}
 |\nabla^j\partial_t^m A_N|+|\nabla^j\partial_t^m V_N|
  &\le C_{jm}\rho^{-1-j-2m},\label{eq:Ngeometry}\\
 |\nabla^j\partial_t^m F_N|
  &\le C_{jm}\rho^{\alpha_0-1-j-2m}\quad(r<4r_0),\label{eq:innerdefect}\\
 |\nabla^j\partial_t^m F_N|
  &\le C_{jm}r^{-1-j-2m}\quad(r>r_0/2).\label{eq:outerdefect}
\end{align}
The preceding graph estimates also give an embedding $(x,z)\mapsto x+z\mathbf n_{N_t}(x)$ for $x\in N_t$ and $|z|<c\rho(x,t)$; cf.\ \cite{BWspace}. Here $c>0$ and the constants above are independent of sufficiently small $r_0,T$.

\subsection{The linear equation on the expander}\label{sec:inner}

By \eqref{eq:Ngeometry} and the weighted norms \eqref{eq:Xnorm}--\eqref{eq:Ynorm}, $P_N=\partial_t-\Delta_N-|A_N|^2$ is bounded from $\cX_T$ to $\cY_T$, with norm independent of $r_0,T$. We first construct a right inverse for $P_E=\partial_t-\Delta_{E_t}-|A_{E_t}|^2$ on $E_t$, by rescaling to $\partial_\tau-L_E$ on $E$.

\begin{proposition}\label{prop:innerinverse}
For every $T>0$, the operator $\partial_\tau-L_E:\cX_E^\beta\to\cY_E^\beta$ has a bounded linear right inverse, with norm independent of $T$.
\end{proposition}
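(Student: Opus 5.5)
The plan is to work directly on the fixed expander $E$ and first translate the norms \eqref{eq:nativenorms} into native weights. Since $\rho(y,t)=\sqrt t\,(1+|x|^2)^{1/2}$ for $y=\sqrt t\,x$, and $p=a-1$, $\beta=a/2$, the conditions $\norm u_{\cX_E^\beta}<\infty$ and $\norm F_{\cY_E^\beta}<\infty$ amount to $C^{2,\alpha}_P$, respectively $C^{0,\alpha}_P$, bounds on unit-size parabolic cylinders in $(x,\tau)$ for $e^{-\beta\tau}\langle x\rangle^{1-a}u$ and $e^{-\beta\tau}\langle x\rangle^{1-a}F$. Here $\langle x\rangle=(1+|x|^2)^{1/2}$. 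The difficulty is that $F$ decays only polynomially, so it does not lie in $W^0_{1/4}(E)$ and the spectral decomposition of \cite[Lemma 4.1]{BWdegree} cannot be applied to $F$ directly. I would therefore use a two-step construction: a barrier step with a compactly supported zeroth-order term, followed by a spectral correction, as outlined in the proof ideas following \cite{CM,ChenAncient}.

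\emph{Step 1: barrier inverse.} Let $w=e^{\beta\tau}\langle x\rangle^{a-1}$. Using the AC structure ($|A_E|\le C\langle x\rangle^{-1}$), the computation outside a large ball $B_{R_1}$ is
\[
(\partial_\tau-L_E)w=\bigl(\beta-\tfrac{a-1}{2}+\tfrac12-|A_E|^2+O(\langle x\rangle^{-2})\bigr)w=(1+o(1))w .
\]
Choosing a cutoff $\eta$ supported in $B_{2R_1}$ and $K$ large gives $(\partial_\tau-L_E+K\eta)w\ge\tfrac12 w$ everywhere. For $\tau_0<\log T$, solve $(\partial_\tau-L_E+K\eta)u_{\tau_0}=F$ with $u_{\tau_0}(\cdot,\tau_0)=0$, first on large balls and then exhausting $E$. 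The maximum principle with barrier $\pm2\norm F w$ gives $|u_{\tau_0}|\le C\norm F_{\cY_E^\beta}w$ uniformly in $\tau_0$. Interior Schauder estimates \cite{Krylov} on unit cylinders upgrade this to a uniform $\cX_E^\beta$ bound. Letting $\tau_0\to-\infty$ along a subsequence produces $u_1$, depending linearly on $F$ if I instead define it as the limit of the linear maps; uniqueness in the class $|u|\le Cw$ follows from the same barrier.

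\emph{Step 2: spectral correction.} Set $g=K\eta u_1$, which is compactly supported with $\norm{g(\tau)}_0^{W,(1/4)}\le Ce^{\beta\tau}\norm F$. I then solve $(\partial_\tau-L_E)v=g$ by expanding $v=\sum v_i\psi_i$, so that $v_i'+\mu_iv_i=g_i$.
\begin{itemize}
\item For $\mu_i>-\beta$, integrate from $-\infty$: $v_i(\tau)=\int_{-\infty}^\tau e^{-\mu_i(\tau-s)}g_i(s)\,ds$.
\item For the finitely many $\mu_i<-\beta$, impose $v_i(\log T)=0$ and integrate backward.
\end{itemize}
Since $-\beta\notin\spec(-L_E)$, both choices give $|v_i(\tau)|\le|\mu_i+\beta|^{-1}e^{\beta\tau}\sup_s e^{-\beta s}|g_i(s)|$, independent of $T$. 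Combined with the gap $\inf_i|\mu_i+\beta|>0$, this yields $\norm{v(\tau)}_0^{W,(1/4)}\le Ce^{\beta\tau}\norm F$. The equivalence of $\norm\cdot_2^{W,(1/4)}$ with the graph norm, together with local boundedness \cite{Lieberman} and Schauder estimates, then gives $C^{2,\alpha}_P$ bounds for $v$ on unit cylinders inside any fixed ball, with the factor $e^{\beta\tau}$. The candidate inverse is $F\mapsto u_1+v$, since $(\partial_\tau-L_E)(u_1+v)=F-K\eta u_1+g=F$. Both pieces are linear in $F$.

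\emph{Main obstacle.} The hard step is showing $v\in\cX_E^\beta$, that is, pointwise decay $|v|\le C e^{\beta\tau}\langle x\rangle^{a-1}$ at spatial infinity, uniformly as $\tau\to-\infty$ and independently of $T$. The Gaussian-weighted $L^2$ bound controls $v$ only in an averaged sense. I would handle this with the exterior barrier argument. On $E\setminus B_{R_1}$, $v$ solves the homogeneous equation $(\partial_\tau-L_E)v=0$. Its boundary values on $\partial B_{R_1}$ are bounded by $Ce^{\beta\tau}\norm F$ from the interior estimates. It has subexponential growth at infinity on each time slice, because $e^{r^2/8}v\in L^2$ together with local estimates gives this. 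The comparison function $C\norm F w$ is a supersolution there by the computation in Step 1. I would apply a Phragm\'en--Lindel\"of type maximum principle on $[\tau_0,\log T]$ with $\tau_0\to-\infty$; the initial values tend to zero relative to $w$ because of the factor $e^{\beta\tau}$ in the $L^2$ bound. Schauder estimates then give the full $\cX_E^\beta$ bound with constant independent of $T$. If the justification at $\tau_0\to-\infty$ proves delicate, I would instead first construct the exterior bound for the finite-horizon solutions and pass to the limit, exactly as in Step 1.
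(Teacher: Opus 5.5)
Your Steps 1 and 2 follow the paper's proof. The common elements are the compactly supported potential that makes $e^{\beta\tau}\langle x\rangle^{a-1}$ a strict supersolution, the limit of zero-initial-data solutions as $\tau_0\to-\infty$ with uniqueness from the margin, and the spectral correction. That correction integrates the modes with $\mu_i>-\beta$ from $-\infty$ and the finitely many with $\mu_i<-\beta$ backward from $\log T$.

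\textbf{The gap.} It is in the step you single out as the main obstacle: the exterior comparison does not close at the initial time.
\begin{itemize}
\item The bound $\norm{v(\tau_0)}_0^{W,(1/4)}\le Ce^{\beta\tau_0}\norm F$ and the barrier $w(\cdot,\tau_0)=e^{\beta\tau_0}\langle x\rangle^{a-1}$ carry the same factor $e^{\beta\tau_0}$. So nothing tends to zero relative to $w$ as $\tau_0\to-\infty$.
\item A Phragm\'en--Lindel\"of principle handles growth at spatial infinity for $\tau>\tau_0$. It still requires $|v(\cdot,\tau_0)|\le\Lambda\,w(\cdot,\tau_0)$ on the whole exterior. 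Even with the supersolution $\Lambda e^{-(\tau-\tau_0)/2}w+C\norm F w$, you need a finite $\Lambda(\tau_0)$ with $\Lambda(\tau_0)e^{\tau_0/2}\to0$.
\item Subexponential growth of $v(\cdot,\tau_0)$ gives no such $\Lambda$. What is needed is polynomial decay of $v$ at infinity, which is what you are trying to prove.
\item Your fallback does not avoid this. With terminal conditions on the unstable modes, the finite-horizon data at $\tau_0$ are a nonzero combination of the $\psi_i$ with $\mu_i<-\beta$. Their pointwise decay again has to come from the weighted $L^2$ bound.
\end{itemize}

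\textbf{The fix.} The ingredients you already cite give far more than subexponential growth, and they make the barrier unnecessary. This is how the paper concludes. Outside $\supp\eta$, $v$ solves the homogeneous equation. Use backward cylinders of size $\theta$ that follow the drift $dx/d\tau=-x^T/2$; these are the cylinders actually used in $\cX_E^\beta$. On them the coefficients of $\partial_\tau-L_E$ are uniformly bounded. Since $d|x|^2/d\tau=-|x^T|^2$, the backward cylinder lies in $\{|x|\ge|x_0|-2\theta\}$. The weight $e^{r^2/4}$ then gives
\[
\norm{v}_{L^2(\mathrm{cyl})}\le C\norm F\,e^{\beta\tau_0}e^{-(|x_0|-2\theta)^2/8},
\]
where $\mathrm{cyl}$ is the drifted backward cylinder. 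Homogeneous interior estimates transfer this bound to the $C^{2,\alpha}_P$ norm on the smaller cylinder. Since $e^{-(|x_0|-2\theta)^2/8}\le C\langle x_0\rangle^{a-1}$, this yields $v\in\cX_E^\beta$ directly, with a $T$-independent bound; compare \eqref{eq:gaussiantail}.

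\textbf{Drifted cylinders.} Your reading of the norms as bounds on unit cylinders in fixed $(x,\tau)$ coordinates should be corrected. There the first-order term $\tfrac12x\cdot\nabla_E$ is unbounded, so Schauder constants are not uniform at infinity. This affects both your Step 1 upgrade and the estimate above.

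\textbf{Summing the modes.} Your per-mode bounds with $\sup_s$ taken separately for each $i$ do not sum to a $W^0_{1/4}$ bound. Instead, bound the two time integrals in norm by Minkowski's inequality. Use the operator-norm decay $e^{-\delta|\tau-s|}$ of $e^{(\tau-s)(L_E-\beta)}\Pi_{\gtrless-\beta}$ on the appropriate half-lines, with $\delta=\dist(-\beta,\spec(-L_E))$.
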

\begin{proof}
Let $F\in\cY_E^\beta$ and set $\phi(x)=(1+|x|^2)^{p/2}$. For $t=e^\tau$, we have $\rho(\sqrt t x,t)=\sqrt t(1+|x|^2)^{1/2}$. Since $(p+1)/2=\beta$, the definitions \eqref{eq:Ynorm} and \eqref{eq:nativenorms} give
\begin{equation*}
 |F(x,\tau)|\le\norm F_{\cY_E^\beta}\sqrt t\,\rho(\sqrt t x,t)^p=\norm F_{\cY_E^\beta}e^{\beta\tau}\phi(x).
\end{equation*}
For local estimates, we return to $P_E$ on $E_t$ and apply interior Schauder estimates on $Q_{2\theta}(y_0,t_0)$ to control solutions on $Q_\theta(y_0,t_0)$, following \cite{ChenAncient}. Here the cylinders are those defined in Subsection~\ref{sec:weighted-norms}, with $y_0\in E_{t_0}$. Parabolic dilation by $t_0^{-1/2}$ changes $E_t$ into $\sqrt{t/t_0}\,E$, with $1-4\theta^2\le t/t_0\le1$. The smoothness and AC asymptotics of $E$ give local graphical representations with uniform derivative bounds on this fixed time interval, so the Schauder constants are independent of $y_0,t_0$. Since $t\asymp t_0$ and $\rho(y,t)\asymp\rho(y_0,t_0)$ on $Q_{2\theta}(y_0,t_0)$, dividing the local estimates by the denominators in \eqref{eq:Xnorm}--\eqref{eq:Ynorm} and using \eqref{eq:nativenorms} gives the corresponding estimates on the fixed expander $E$.

Differentiation gives $\beta-L_E\phi/\phi=1+O(r^{-2})$, so this expression is at least $1/2$ outside a compact set. Choose a fixed nonnegative smooth compactly supported function $\kappa$ sufficiently large on that set so that
\begin{equation}\label{eq:killing}
 \beta-(L_E-\kappa)\phi/\phi\ge c_0>0.
\end{equation}
Standard linear parabolic theory \cite{Lieberman} gives weak solutions of $(\partial_\tau-L_E+\kappa)u=F$ with zero initial and lateral data on smooth bounded domains in $E$ and finite time intervals ending at $\log T$. The barriers $\pm C\norm F_{\cY_E^\beta}\,e^{\beta\tau}\phi$ bound these solutions independently of the domain and initial time. Interior Schauder estimates therefore give uniform local $C_P^{2,\alpha}$ bounds. As the domains exhaust $E$ and the initial time tends to $-\infty$, a diagonal subsequence converges locally, together with its first time derivative and spatial derivatives up to order two, to a classical solution $u_0$ of $(\partial_\tau-L_E+\kappa)u_0=F$ on $E\times(-\infty,\log T]$. Lower semicontinuity of the H\"older seminorm preserves the weighted estimates, so
\begin{equation}\label{eq:u0bound}
 \norm{u_0}_{\cX_E^\beta}\le C\norm F_{\cY_E^\beta}.
\end{equation}
If $u\in\cX_E^\beta$ satisfies $(\partial_\tau-L_E+\kappa)u=0$, then \eqref{eq:nativenorms} gives $|u(x,\tau)|\le\norm u_{\cX_E^\beta}e^{\beta\tau}\phi(x)$. Hence $w=e^{-\beta\tau}\phi^{-1}u$ is bounded and satisfies
\begin{equation*}
 w_\tau-\Delta_Ew-\tfrac12x\cdot\nabla_Ew-2\nabla_E\log\phi\cdot\nabla_Ew+cw=0,
\end{equation*}
where $c=\beta-(L_E-\kappa)\phi/\phi\ge c_0$ by \eqref{eq:killing}. With $q=1+r^2$,
\begin{equation*}
 (\Delta_E+(\tfrac12x^T+2\nabla_E\log\phi)\cdot\nabla_E)q=2n+r^2+4p|x^T|^2/(1+r^2)\le Cq.
\end{equation*}
Choose $K\ge0$ sufficiently large. For any $s<\log T$ and $\delta>0$, the function $\norm{w(s)}_\infty e^{-c_0(\tau-s)}+\delta e^{K(\tau-s)}q$ is a supersolution of the equation for $w$ on $[s,\log T]$, by the preceding inequality and $c\ge c_0$. It dominates both $w$ and $-w$ at $\tau=s$. Since $w$ is bounded and $q\to\infty$ at spatial infinity, it also dominates them on the lateral boundaries of sufficiently large bounded domains. Applying the maximum principle and letting these domains exhaust $E$ gives
\begin{equation*}
 |w(x,\tau)|\le\norm{w(s)}_\infty e^{-c_0(\tau-s)}+\delta e^{K(\tau-s)}q(x),\qquad s\le\tau\le\log T.
\end{equation*}
Letting $\delta\downarrow0$ and then $s\to-\infty$ gives $w=0$, since $w$ is bounded uniformly in space and time. Thus the solution $u_0$ constructed above is unique in $\cX_E^\beta$ and, by linearity of the equation, depends linearly on $F$.

Set $g=e^{-\beta\tau}\kappa u_0$. Since $\kappa$ has fixed compact support, the local parabolic $C_P^{2,\alpha}$ bound in \eqref{eq:u0bound} gives
\begin{equation}\label{eq:DAbound}
 \sup_{\tau\le\log T}\norm{g(\tau)}_{2}^{W,(1/4)}\le C\norm F_{\cY_E^\beta},
 \qquad g\in C( (-\infty,\log T];W^2_{1/4}(E)).
\end{equation}
It also gives a local $C_P^{0,\alpha}$ bound for $g$.

Using the orthonormal eigenfunctions fixed in Subsection~\ref{sec:AC}, define the $W^0_{1/4}(E)$-orthogonal projections by
\begin{equation*}
 \Pi_{<-\beta}f=\sum_{\mu_i<-\beta}\left(\int_E f\psi_i e^{r^2/4}\dd\cH^n\right)\psi_i,\qquad
 \Pi_{>-\beta}=\Id-\Pi_{<-\beta}.
\end{equation*}
The sum is finite because $\beta>0$ and the negative eigenspace of $-L_E$ is finite-dimensional. Since $-\beta\notin\spec(-L_E)$, the complementary projection is onto the closed span of the eigenfunctions with $\mu_i>-\beta$. Define
\begin{equation*}
 v(\tau)=\int_{-\infty}^{\tau}e^{(\tau-s)(L_E-\beta)}\Pi_{>-\beta}g(s)\dd s
    -\int_{\tau}^{\log T}e^{(\tau-s)(L_E-\beta)}\Pi_{<-\beta}g(s)\dd s.
\end{equation*}
This choice gives $\Pi_{<-\beta}v(\log T)=0$. This spectral construction follows \cite[Lemma 3.1]{CM} and \cite[Proposition 3.3]{ChenAncient}. Estimate \eqref{eq:DAbound}, the norm equivalence in Subsection~\ref{sec:AC}, and the spectral gap at $-\beta$ imply that both integrals converge in $W^2_{1/4}(E)$, and $v$ satisfies
\begin{equation*}
 \sup_{\tau\le\log T}\norm{v(\tau)}_2^{W,(1/4)}\le\frac{2C}{\dist(-\beta,\spec(-L_E))}\norm F_{\cY_E^\beta}.
\end{equation*}
Variation of constants gives $v_\tau-(L_E-\beta)v=g$ in $W^0_{1/4}(E)$, and local parabolic regularity makes $v$ classical. Thus $z=e^{\beta\tau}v$ satisfies $(\partial_\tau-L_E)z=\kappa u_0$, and $(\partial_\tau-L_E)(u_0+z)=F$.

Fix $(x_0,\tau_0)\in E\times(-\infty,\log T]$, and set $t_0=e^{\tau_0}$ and $y_0=\sqrt{t_0}\,x_0$. Consider $Q_{2\theta}(y_0,t_0)$, whose domain in the normal coordinates of Subsection~\ref{sec:weighted-norms} is $B_{2\theta\sqrt{t_0}}(0)\times[t_0-4\theta^2t_0,t_0]$. Pull this cylinder back to $E$ by $(y,t)\mapsto(y/\sqrt t,\log t)$. Along a normal parametrization of $E_t$, the rescaled position satisfies $dx/d\tau=-x^T/2$, so $d|x|^2/d\tau=-|x^T|^2$. Hence every point of the pulled-back cylinder has radius at least $|x_0|-2\theta$. In the coordinates of Subsection~\ref{sec:weighted-norms}, the coordinate measure on the rescaled domain $Q_{2\theta}=B_{2\theta}\times[-4\theta^2,0]$ is uniformly comparable to $\dd\cH^n\dd\tau$ on the pullback of $Q_{2\theta}(y_0,t_0)$ in $E$. The uniform $W^0_{1/4}(E)$ bound for $v$ therefore gives
\begin{equation}\label{eq:gaussiantail}
 \norm{v}_{L^2(Q_{2\theta})}
 \le C\norm F_{\cY_E^\beta}\exp[-(|x_0|-2\theta)^2/8]
\end{equation}
for large $|x_0|$. For such $x_0$, the pullback of $Q_{2\theta}(y_0,t_0)$ is disjoint from $\supp \kappa$, so $v_\tau-(L_E-\beta)v=0$ there. On $E_t$, the function $V(y,t)=v(y/\sqrt t,\log t)$ satisfies $(P_E+(\beta+\tfrac12)/t)V=0$ in the corresponding region. After parabolic dilation by $t_0^{-1/2}$, the additional zeroth-order coefficient becomes $(\beta+\tfrac12)t_0/t$, which is uniformly bounded on $Q_{2\theta}(y_0,t_0)$. Homogeneous interior estimates from $Q_{2\theta}(y_0,t_0)$ to $Q_\theta(y_0,t_0)$ and \eqref{eq:gaussiantail} give the same decay for the $C_P^{2,\alpha}$ norm. For bounded $|x_0|$, the estimate on $Q_\theta(y_0,t_0)$ follows from the local source bound and the $W^0_{1/4}(E)$ bound. Since the Gaussian decay is bounded by $C\phi(x_0)$, multiplication by $e^{\beta\tau}$ and \eqref{eq:nativenorms} give
\begin{equation*}
 \norm z_{\cX_E^\beta}\le C\norm F_{\cY_E^\beta},
\end{equation*}
with $C$ independent of $T$. Together with \eqref{eq:u0bound}, this shows that $F\mapsto u_0+z$ is a bounded linear right inverse of $\partial_\tau-L_E$.
\end{proof}

To return to $E_t$, set $v(\sqrt t x,t)=\sqrt t\,u(x,\tau)$. The rescaling gives
\begin{equation*}
 P_Ev=t^{-1/2}(\partial_\tau-L_E)u.
\end{equation*}
Thus $P_Ev=F$ is equivalent to $(\partial_\tau-L_E)u=\sqrt t\,F(\sqrt t x,t)$. By \eqref{eq:nativenorms}, Proposition~\ref{prop:innerinverse} gives a right inverse $R_i:\cY_T\to\cX_T$ for $P_E$, with norm $C_i$ independent of $T$.

\subsection{The linear equation on \texorpdfstring{$N_t$}{N\_t}}\label{sec:outer}\label{sec:parametrix}

Choose a fixed $\sigma>0$ with $\sigma^2\le\eps_*/24^2$. Apply the construction in Subsection~\ref{sec:reference} with gluing radius $r_0/24$ at time $\sigma^2r_0^2$ to obtain a closed embedded hypersurface $S_{r_0}$ agreeing with $M_0$ for $r\ge r_0/2$. Near the origin it agrees with $\sigma r_0E$. After dilation by $r_0^{-1}$, the contribution of $h_{\cC}$ has size $O(r_0^{\alpha_0})$ in all fixed derivative norms. Consequently
\begin{equation}\label{eq:Sgeometry}
 |\nabla^j A_{S_{r_0}}|\le C_j\widetilde\rho^{-1-j},
 \qquad \widetilde\rho(x)=\sqrt{r_0^2+r^2},
\end{equation}
and $(x,z)\mapsto x+z\mathbf n_{S_{r_0}}(x)$ is an embedding for $x\in S_{r_0}$ and $|z|<c\widetilde\rho(x)$. The constants may depend on $\sigma$ but are independent of $r_0$.

On the fixed $S_{r_0}$ set $P_S=\partial_t-\Delta_{S_{r_0}}-|A_{S_{r_0}}|^2$. Use \eqref{eq:Xnorm}--\eqref{eq:Ynorm} on $S_{r_0}$, with $\rho$ replaced by $\widetilde\rho$.

\begin{proposition}\label{prop:outerinverse}
There are $c_o,C_o>0$, independent of $r_0,T$, such that for $T\le c_o r_0^2$, $P_S$ has a bounded linear inverse $R_o$ with zero initial data and norm at most $C_o$.
\end{proposition}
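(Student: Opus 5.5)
The plan is to prove Proposition~\ref{prop:outerinverse} in three steps: a barrier argument for the pointwise bound, local Schauder estimates at scale $\sqrt{t_0}$, and an existence argument by truncating near $t=0$. The key point is that when $T\le c_or_0^2$, every cylinder $Q_{2\theta}(x_0,t_0)$ has spatial scale at most $2\theta\sqrt{c_o}\,r_0$. That is far below the curvature scale of $S_{r_0}$, which by \eqref{eq:Sgeometry} is at least $c\,\widetilde\rho\ge c\,r_0$. So on these cylinders $P_S$ is a small perturbation of the flat heat operator, uniformly in $r_0$.

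\emph{Step 1: the $C^0$ bound.} Set $\psi=\widetilde\rho^{\,p}$, with $p=a-1<0$. Direct differentiation together with \eqref{eq:Sgeometry} gives
\[
|\Delta_{S_{r_0}}\psi|+|A_{S_{r_0}}|^2\psi\le C\widetilde\rho^{-2}\psi\le Cr_0^{-2}\psi,
\]
with $C$ independent of $r_0$. For $0<t\le T\le c_or_0^2$ this yields
\[
P_S(t\psi)=\psi-t(\Delta\psi+|A|^2\psi)\ge(1-Cc_o)\psi\ge\tfrac12\psi
\]
once $c_o$ is small. Hence $\pm2\norm F_{\cY_T}\,t\psi$ are a super- and a subsolution, since $|F|\le\norm F_{\cY_T}\psi$. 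They vanish at $t=0$. Because $S_{r_0}$ is closed, the maximum principle gives
\[
|u|\le 2\norm F_{\cY_T}\,t\,\widetilde\rho^{\,p}
\]
for any solution with zero initial data. The same comparison, applied to the difference of two solutions, gives uniqueness and therefore linearity of $R_o$.

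\emph{Step 2: existence.} The source $F$ is bounded, since $|F|\le C\norm F_{\cY_T}r_0^p$, and it is locally H\"older only for $t>0$. So I would solve $P_Su_\delta=F$ on $S_{r_0}\times[\delta,T]$ with $u_\delta(\delta)=0$, using standard theory on the compact manifold with smooth bounded coefficients. Each $u_\delta$ obeys the barrier bound of Step~1. The local estimates of Step~3 are uniform in $\delta$ on cylinders contained in $\{t>\delta\}$. A diagonal limit as $\delta\downarrow0$ then gives a classical solution on $(0,T]$. The barrier forces $u\to0$ as $t\downarrow0$, so the initial datum is zero.

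\emph{Step 3: weighted Schauder estimates.} This is the step I expect to need the most care. Fix $(x_0,t_0)$ and rescale by $\sqrt{t_0}$ in space and $t_0$ in time, using the geodesic coordinates of Subsection~\ref{sec:weighted-norms}. Set
\[
\widehat u(y,s)=u(\sqrt{t_0}y,t_0+t_0s)/(t_0\widetilde\rho(x_0)^p),
\]
and similarly $\widehat F=F/\widetilde\rho(x_0)^p$. On $Q_{2\theta}$ the rescaled metric is $C^k$-close to Euclidean, since the rescaled curvature is at most $C\sqrt{t_0}/\widetilde\rho\le C\sqrt{c_o}$. The potential $t_0|A|^2$ is bounded with all derivatives, and $\widetilde\rho\asymp\widetilde\rho(x_0)$ there. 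Since $Q_{2\theta}(x_0,t_0)\subset\{t\ge(1-4\theta^2)t_0\}$, no initial boundary is involved, so the interior Schauder estimates of \cite{Krylov,CLSZ} apply. They give
\[
\norm{\widehat u}_{C_P^{2,\alpha}(Q_\theta)}\le C\bigl(\sup_{Q_{2\theta}}|\widehat u|+\norm{\widehat F}_{C_P^{0,\alpha}(Q_{2\theta})}\bigr).
\]
Step~1 bounds the first term by $C\norm F_{\cY_T}$. The second term is controlled by $\norm F_{\cY_T}$ through the equivalence of the $Q_\theta$ and $Q_{2\theta}$ norms. Taking the supremum over $(x_0,t_0)$ gives $\norm u_{\cX_T}\le C_o\norm F_{\cY_T}$. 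The constants are independent of $r_0$ and $T$ because all geometric inputs depend only on \eqref{eq:Sgeometry}, on the embedding radius $c\widetilde\rho$, and on $c_o$.
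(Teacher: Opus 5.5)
Your proposal is correct and follows essentially the same route as the paper. Both use the barrier $2t\widetilde\rho^{\,p}$, justified by $|(\Delta_{S_{r_0}}+|A_{S_{r_0}}|^2)\widetilde\rho^{\,p}|\le C\widetilde\rho^{\,p-2}$ and $T\le c_or_0^2$, then comparison on the closed hypersurface for the $C^0$ bound and uniqueness, and then rescaled interior Schauder estimates from $Q_{2\theta}(x_0,t_0)$ to $Q_\theta(x_0,t_0)$. The only difference is in the existence step: you solve from $t=\delta$ and let $\delta\downarrow0$, whereas the paper writes the solution directly as the Duhamel integral $\int_0^te^{(t-s)(\Delta_{S_{r_0}}+|A_{S_{r_0}}|^2)}F(s)\,ds$; this changes nothing in the argument.
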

\begin{proof}
On $S_{r_0}$, $|\nabla(\widetilde\rho^2)|^2\le4\widetilde\rho^2$ and $\Delta(\widetilde\rho^2)=2n+2\langle x,\vec H_{S_{r_0}}\rangle$, whose absolute value is bounded by \eqref{eq:Sgeometry}. Thus
\begin{equation*}
 |(\Delta_{S_{r_0}}+|A_{S_{r_0}}|^2)\widetilde\rho^p|
 \le C\widetilde\rho^{p-2},
 \qquad P_S(2t\widetilde\rho^p)\ge\widetilde\rho^p
 \quad(0<t\le c_o r_0^2).
\end{equation*}
For $\norm F_{\cY_T}\le1$, the semigroup integral
\begin{equation*}
 u(t)=\int_0^t e^{(t-s)(\Delta_{S_{r_0}}+|A_{S_{r_0}}|^2)}F(s)\dd s
\end{equation*}
solves $P_Su=F$ with zero initial data. Comparison gives $|u|\le2t\widetilde\rho^p$. On $Q_{2\theta}(x_0,t_0)$ on $S_{r_0}$, $\widetilde\rho$ is comparable to $\widetilde\rho(x_0)$ and $\sqrt t/\widetilde\rho\le\sqrt{T/r_0^2}\le\sqrt{c_o}$. By \eqref{eq:Sgeometry}, the coefficients of $P_S$ after rescaling are uniformly bounded. Interior Schauder estimates from $Q_{2\theta}(x_0,t_0)$ to $Q_\theta(x_0,t_0)$ give the asserted norm bound. Uniqueness follows from the maximum principle.
\end{proof}

Choose the following smooth cutoffs on $N_t$ depending on the ambient radius, and pull them back to $E_t$ and $S_{r_0}$:
\begin{center}
\begin{tabular}{c|c|c}
cutoff&equals one&equals zero\\ \hline
$\eta_i$&$r\le2r_0$&$r\ge3r_0$\\
$\eta_o=1-\eta_i$&$r\ge3r_0$&$r\le2r_0$\\
$\chi_i$&$r\le3r_0$&$r\ge(7/2)r_0$\\
$\chi_o$&$r\ge2r_0$&$r\le r_0$
\end{tabular}
\end{center}
Consequently
\begin{equation}\label{eq:partition}
 \chi_i\eta_i+\chi_o\eta_o=1
\end{equation}
on $N_t$.

Use the normal graph maps on $r<4r_0$ and $r>r_0/2$ to pull functions and differential operators back to $E_t$ and $S_{r_0}$, respectively. Since $\eta_i$ vanishes near $r=4r_0$ and $\eta_o$ vanishes near $r=r_0/2$, extend the pulled-back functions $\eta_iF$ and $\eta_oF$ by zero to $E_t$ and $S_{r_0}$. Define the approximate right inverse $Q$ by
\begin{equation*}
 QF=\chi_iR_i(\eta_iF)+\chi_oR_o(\eta_oF).
\end{equation*}

\begin{proposition}\label{prop:globalinverse}
After decreasing $c_o$ if necessary, there are constants $K_0,C$ independent of $r_0,T$ such that, for $T\le\min\{\eps_*,c_o,1\}r_0^2$, $\norm Q_{\cY_T\to\cX_T}\le K_0$ and, if $T=\eps r_0^2$,
\begin{equation}\label{eq:parametrixerror}
 \norm{P_NQ-\Id}_{\cY_T\to\cY_T}
 \le CC_i r_0^{\alpha_0}+C(C_i+C_o)(\sqrt\eps+\eps).
\end{equation}
If the right side is less than $1/2$, then
\begin{equation}\label{eq:globalR}
 R_T=Q(P_NQ)^{-1},\qquad
 P_NR_T=\Id,\qquad \norm{R_T}\le2K_0.
\end{equation}
\end{proposition}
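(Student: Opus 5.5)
The proof follows the usual parametrix pattern: bound $Q$ piece by piece, expand $P_NQ-\Id$ into localized error terms using \eqref{eq:partition}, and close with a Neumann series.

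\emph{Bound on $Q$.} First, multiplication by $\eta_i,\eta_o$ is bounded on $\cY_T$ uniformly in $r_0,T$. On a cylinder $Q_\theta(x_0,t_0)$ the cutoffs vary on scale $r_0$, while the cylinder has spatial size $\theta\sqrt{t_0}\le\theta\sqrt T\le r_0$, so their scaled H\"older norms are bounded. Second, the pullbacks to $E_t$ and $S_{r_0}$ change the weighted norms only by bounded factors. For $E_t$, on $r<4r_0$ we have $\rho$ comparable on $N_t$ and $E_t$ because the height $h$ is $O(\rho^{1+\alpha_0})$ by \eqref{eq:hjets}. For $S_{r_0}$, on $r>r_0/2$ and $t\le T\le c_or_0^2$ we have $\rho\asymp r\asymp\widetilde\rho$, and $k$ is small by \eqref{eq:kjets1}. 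Third, multiplication by $\chi_i,\chi_o$ is bounded on $\cX_T$ for the same scale reason. Together with Proposition~\ref{prop:innerinverse} (giving $C_i$) and Proposition~\ref{prop:outerinverse} (giving $C_o$), this yields $\norm Q\le K_0$.

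\emph{Error expansion.} Write $u_i=R_i(\eta_iF)$ and $u_o=R_o(\eta_oF)$, so $P_Eu_i=\eta_iF$ and $P_Su_o=\eta_oF$. Since $\chi_i\eta_i=\eta_i$ and $\chi_o\eta_o=\eta_o$, \eqref{eq:partition} gives
\[
P_NQF-F=\chi_i(P_N-P_E)u_i+\chi_o(P_N-P_S)u_o+[P_N,\chi_i]u_i+[P_N,\chi_o]u_o .
\]

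\emph{Commutator terms.} Each commutator $[P_N,\chi]u$ equals $(\partial_t\chi)u-2\nabla\chi\cdot\nabla u-(\Delta\chi)u$, supported in $r\sim r_0$. By \eqref{eq:Xpoint}, with $\rho\asymp r_0$ there,
\[
|\nabla\chi||\nabla u|\lesssim r_0^{-1}\sqrt t\,\rho^p\norm u,\qquad |\Delta\chi||u|\lesssim r_0^{-2}t\rho^p\norm u .
\]
The time derivative $\partial_t\chi$ under the normal parametrization involves the normal velocity \eqref{eq:Ngeometry} times $|\nabla\chi|$, and is handled the same way. Since $t\le\eps r_0^2$, dividing by $\rho^p$ gives contributions $C(C_i+C_o)(\sqrt\eps+\eps)$, and the H\"older seminorms scale identically.

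\emph{Operator-difference terms.} For $(P_N-P_E)u_i$ on $r\le 7r_0/2$: $N_t$ is a normal graph over $E_t$ of height $h$. The pulled-back operator differs from $P_E$ by coefficients controlled by $|h|/\rho+|\nabla h|+\rho|\nabla^2 h|\lesssim\rho^{\alpha_0}\le Cr_0^{\alpha_0}$, using \eqref{eq:hjets}. For the time derivative, the normal-parametrization mismatch is controlled by the tangential velocity, which is of size $\rho^{\alpha_0}$ times $\rho^{-1}$ by \eqref{eq:hjets} with $m=1$. Combined with the derivative bounds \eqref{eq:Xpoint}, this gives $CC_ir_0^{\alpha_0}$. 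For $(P_N-P_S)u_o$ on $r\ge r_0$: here $S_{r_0}$ agrees with $M_0$ for $r\ge r_0/2$, and $N_t$ is a graph of height $k$ over $M_0$. By \eqref{eq:kjets1}--\eqref{eq:kjets2}, $|k|/r+|\nabla k|+r|\nabla^2k|\lesssim t/r^2\le\eps$, and the time-dependence error is of order $r^{-1}$ times a velocity. The contribution is $CC_o\eps$, plus a $\sqrt\eps$ term after using \eqref{eq:Xpoint}.

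\emph{Neumann series.} If the right side of \eqref{eq:parametrixerror} is less than $1/2$, then $P_NQ=\Id+(P_NQ-\Id)$ is invertible on $\cY_T$ with inverse of norm at most $2$. Hence \eqref{eq:globalR} holds.

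\emph{Main obstacle.} The hardest step is the operator-difference estimate, specifically the time-derivative part. One must compare $\partial_t$ along normal parametrizations of $N_t$ with that of $E_t$ (respectively of the static $S_{r_0}$), and bound the tangential correction in the scaled $C_P^{0,\alpha}$ norm without losing powers of $\rho$. In the outer region, the velocity of $N_t$ relative to $S_{r_0}$ is $O(r^{-1})$ by \eqref{eq:kjets2}. Multiplied by $|\nabla u_o|\lesssim\sqrt t\,\rho^p$, this is only $\sqrt t/r\lesssim\sqrt\eps$ relative to $\rho^p$, which is why the $\sqrt\eps$ term appears. I would verify this by writing everything in the fixed graph coordinates over $E_t$ or $S_{r_0}$ and using \eqref{eq:Ngeometry}.
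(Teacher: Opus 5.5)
Your proposal is correct and follows the paper's proof essentially step for step: you bound $Q$ by comparing norms under the graph pullbacks, use the same four-term error expansion from \eqref{eq:partition}, prove the same commutator and coefficient-difference estimates, and close with the Neumann series. The one deviation is in the outer region, where you bound the time-derivative mismatch by the full velocity $|\partial_t k|=O(r^{-1})$, whereas the paper notes that only the component tangent to $N_t$ enters, which is $O(|\partial_t k||\nabla k|)=O(t/r^3)$. Your cruder bound still fits under the $C(C_i+C_o)\sqrt\eps$ term, though that term actually comes from the commutator $\nabla\chi\cdot\nabla u$ rather than from this mismatch.
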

\begin{proof}
Pullback by the normal graph maps gives equivalent norms on $\supp\chi_i$ and its image in $E_t$, and on $\supp\chi_o$ and its image in $S_{r_0}$. The latter agrees there with $M_0$. Choose $c_o$ sufficiently small so that, for $0<t_0\le T$, the cylinder $Q_{2\theta}(x_0,t_0)$ on $N_t$ lies in $\{r<4r_0\}$ if $x_0\in\supp\chi_i(\cdot,t_0)$, and in $\{r>r_0/2\}$ if $x_0\in\supp\chi_o(\cdot,t_0)$. The same inclusions hold for the cylinders $Q_{2\theta}$ on $E_t$ and $S_{r_0}$ centered at the corresponding preimages under the normal graph maps. After graph pullback, the difference between $P_N$ and $P_E$ on $r<4r_0$ has coefficients
\begin{equation}\label{eq:innercoeff}
 O(\rho^{\alpha_0})\nabla^2
 +O(\rho^{\alpha_0-1})\nabla
 +O(\rho^{\alpha_0-2}),
\end{equation}
and its difference from $P_S$ on $r>r_0/2$ has coefficients
\begin{equation}\label{eq:outercoeff}
 O(t/r^2)\nabla^2
 +O(t/r^3)\nabla
 +O(t/r^4).
\end{equation}
The indicated bounds also hold in $C_P^{0,\alpha}$ in the coordinates used for \eqref{eq:Xnorm}--\eqref{eq:Ynorm}.

On $Q_\theta(x_0,t_0)$, use the coordinates of Subsection~\ref{sec:weighted-norms} and put $\ell=\sqrt{t_0}$. By \eqref{eq:hjets}, $|h|/\rho+|\nabla h|\le C\rho^{\alpha_0}$ on $r<4r_0$. For the normal graph map $G(x)=x+h\mathbf n$, the spatial derivatives of order $m\ge1$ of $(G(x)-x)/\ell$ in these coordinates are bounded by $C\ell^{m-1}\rho^{1+\alpha_0-m}\le C\rho^{\alpha_0}$.

Along a normal parametrization of $E_t$, the graph map satisfies
\begin{equation*}
 \partial_tG=\vec H_{E_t}+(\partial_th)\mathbf n_{E_t}-h\nabla_{E_t}\langle\vec H_{E_t},\mathbf n_{E_t}\rangle.
\end{equation*}
For a function $u$ on $N_t$, the chain rule gives
\begin{equation*}
 \partial_t(u\circ G)=(\partial_tu)\circ G+\langle(\nabla_{N_t}u)\circ G,\partial_tG\rangle,
\end{equation*}
where the derivative on the left uses the normal parametrization of $E_t$, and $\partial_tu$ on the right uses that of $N_t$. Only the component of $\partial_tG$ tangent to $N_t$ contributes to the last term. The small $C^1$ graph bound, \eqref{eq:hjets}, and the geometric bounds for $E_t$ give a first-order coefficient $O(\rho^{\alpha_0-1})$. For the graph over the fixed $M_0$, $\partial_tG=(\partial_tk)\mathbf n_{M_0}$, whose tangential component has size $O(|\partial_tk||\nabla k|)=O(t/r^3)$ by \eqref{eq:kjets1}--\eqref{eq:kjets2}.

In the rescaled coordinates, these coefficients have size at most $C\ell\rho^{\alpha_0-1}$ and $C\ell t/r^3$, respectively. The differentiated graph estimates give the same bounds for their scaled H\"older norms. The ODEs defining the normal parametrizations and the chain rule give uniform $C_P^{2,\alpha}$ bounds for the changes of coordinates and their inverses. The denominators used in the norms are comparable at corresponding points.

The formulas for the metric and second fundamental form in \cite{CS}, together with their spatial derivatives, yield the second-, first-, and zeroth-order spatial differences in \eqref{eq:innercoeff}. All differentiated versions follow from \eqref{eq:hjets}; each spatial derivative in the rescaled coordinates introduces a factor at most $\ell/\rho$, and each time derivative introduces a factor at most $t/\rho^2$. On $r>r_0/2$, use \eqref{eq:kjets1}--\eqref{eq:kjets2} instead. In particular $t\partial_t(t/r^2)=O(t/r^2)$ on these cylinders, so the small factor in the outer coefficient H\"older norm is retained.

These norm comparisons, the bounds for $R_i$ and $R_o$, and the cutoff derivative bounds give the asserted bound for $Q$. Put $u_i=R_i(\eta_iF)$ and $u_o=R_o(\eta_oF)$. By \eqref{eq:partition},
\begin{equation*}
 P_NQF-F=\chi_i(P_N-P_E)u_i+\chi_o(P_N-P_S)u_o+[P_N,\chi_i]u_i+[P_N,\chi_o]u_o.
\end{equation*}
The inner coefficient differences in \eqref{eq:innercoeff}, applied to \eqref{eq:Xpoint}, give $CC_i r_0^{\alpha_0}\norm F_{\cY_T}$. For example, dividing the three terms by $\rho^p$ gives bounds $C_i\rho^{\alpha_0}$, $C_i\sqrt t\rho^{\alpha_0-1}$, and $C_it\rho^{\alpha_0-2}$, respectively; all are at most $CC_i r_0^{\alpha_0}$. The same product calculation is valid for the scaled H\"older seminorms. The coefficients in \eqref{eq:outercoeff} give $CC_o\eps\norm F_{\cY_T}$, since $t/r^2\le C\eps$ on that support.

The commutator is
\begin{equation}\label{eq:commutator}
 [P_N,\chi]u=(\partial_t\chi-\Delta_N\chi)u
             -2\nabla\chi\cdot\nabla u.
\end{equation}
On every transition annulus,
\begin{equation*}
 |\nabla\chi|\le C/r_0,\qquad
 |\partial_t\chi-\Delta_N\chi|\le C/r_0^2,
\end{equation*}
with the same bounds for the $C_P^{0,\alpha}$ norms in these coordinates. Since the cutoffs depend on the ambient radius, $\partial_t\chi=\chi'(r/r_0)V_N\langle x/r,\mathbf{n}_N\rangle/r_0$. Substitution of \eqref{eq:Xpoint} bounds \eqref{eq:commutator} by $C(C_i+C_o)(\sqrt T/r_0+T/r_0^2)\norm F_{\cY_T}$ in $\cY_T$. Multiplication by $\eta_i$ or $\eta_o$ is bounded, since its derivatives in the rescaled coordinates involve $\sqrt t/r_0$ and $t/r_0^2$. This proves \eqref{eq:parametrixerror}. The Neumann series gives \eqref{eq:globalR}.
\end{proof}

\subsection{The nonlinear equation}\label{sec:nonlinear}

\begin{lemma}\label{lem:nonlinear}
If $BT^{a/2}$ is sufficiently small and $\norm v_{\cX_T}\le B$, the normal graph $M_t=\{x+v(x,t)\mathbf n_{N_t}(x):x\in N_t\}$ is embedded and its MCF equation is
\begin{equation}\label{eq:graphPDE}
 P_Nv=F_N+\cQ_N(v).
\end{equation}
The nonlinearity vanishes to second order at zero and
\begin{equation}\label{eq:nonlinearLip}
 \norm{\cQ_N(v)-\cQ_N(w)}_{\cY_T}
 \le C_NBT^{a/2}\norm{v-w}_{\cX_T}
\end{equation}
on that ball, with uniform $C_N$.
\end{lemma}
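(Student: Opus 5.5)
The plan is to work entirely in the rescaled coordinates of Subsection~\ref{sec:weighted-norms}. On each cylinder $Q_{2\theta}(x_0,t_0)$, set $\ell=\sqrt{t_0}$. After the parabolic dilation by $\ell^{-1}$, $N_t$ becomes a hypersurface flow with uniformly bounded geometry: by \eqref{eq:Ngeometry} and $\rho\ge\sqrt t\asymp\ell$, the quantities $\ell^{1+j+2m}|\nabla^j\partial_t^mA_N|$ and $\ell^{1+j+2m}|\nabla^j\partial_t^mV_N|$ are bounded. The first step is to convert $\norm v_{\cX_T}\le B$ into smallness of the rescaled height $\tilde v=v/\ell$. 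From \eqref{eq:Xpoint}, with $p=a-1<0$ and $\rho^2\ge t$, we get
\[
|v|/\ell\le CBt^{1/2}\rho^{a-1}\le CBt^{a/2},\qquad
|\nabla v|\le CB\sqrt t\rho^{a-1}\le CBt^{a/2},\qquad
\ell|\nabla^2v|+\ell|\partial_tv|\le CBt^{a/2}.
\]
The same bounds hold for the scaled H\"older seminorms. Hence the rescaled $C_P^{2,\alpha}$ norm of $\tilde v$ is at most $CBT^{a/2}$. In particular $|v|\le CBT^{a/2}\rho$, which is less than $c\rho$ once $BT^{a/2}$ is small. So the graph lies inside the tubular embedding from Subsection~\ref{sec:reference}, and $M_t$ is embedded. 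Outside $B_{8r_0}$ this is the standard tubular neighbourhood of the fixed smooth $M_0$.

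Next I derive \eqref{eq:graphPDE}. Parametrize $M_t$ by $Y(x,t)=X(x,t)+v\,\mathbf n_N$, using the normal parametrization of $N_t$. The MCF equation $\langle\partial_tY,\mathbf n_M\rangle=\langle\vec H_M,\mathbf n_M\rangle$ is then expanded with the standard formulas for the metric, unit normal and second fundamental form of a normal graph (as in \cite{CS,CDHS}). The normal component of the velocity is $(V_N+\partial_tv)$ plus terms involving $\nabla v$, $v$, $A_N$ and the tangential derivative of $V_N$, all divided by $\sqrt{1+|\nabla v|^2+\dots}$. The mean curvature of the graph equals $\langle\vec H_N,\mathbf n_N\rangle+\Delta_Nv+|A_N|^2v$ plus a remainder. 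Subtracting and isolating the linear part gives $P_Nv=F_N+\cQ_N(v)$. In rescaled variables the structure is
\[
\ell\,\cQ_N(v)=\mathcal F\bigl(\ell A_N,\ell^2\nabla A_N,\ell V_N,\ell^2\nabla V_N;\ \tilde v,\nabla v,\ell\nabla^2v,\ell\partial_tv\bigr).
\]
Here $\mathcal F$ is smooth in all arguments, affine in $(\ell\nabla^2v,\ell\partial_tv)$, and vanishes together with its first derivative in the last group when $(\tilde v,\nabla v,\ell\nabla^2v,\ell\partial_tv)=0$. The coefficients in the first group are uniformly bounded in the scaled $C^{1,\alpha}$ sense.

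The Lipschitz estimate \eqref{eq:nonlinearLip} then follows from the mean value theorem applied to $\mathcal F$. Since $\mathcal F$ vanishes to second order, $\ell(\cQ_N(v)-\cQ_N(w))$ is bounded, in the rescaled $C_P^{0,\alpha}$ norm on $Q_\theta$, by $C$ times the rescaled $C^{2,\alpha}$ size of $(v,w)$ times that of $v-w$. This gives a bound $CBT^{a/2}\cdot\norm{v-w}_{\cX_T}\,t_0\rho^p/\ell^2$, using the first step for $(v,w)$ and \eqref{eq:Xnorm} for $v-w$. Dividing by $\ell$ and then by $\rho^p$ gives exactly $CBT^{a/2}\norm{v-w}_{\cX_T}$ in $\cY_T$, uniformly in $(x_0,t_0)$. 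The constants do not depend on $r_0,T$, because the geometric bounds \eqref{eq:Ngeometry} and the tubular radius $c\rho$ do not.

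The step I expect to need the most care is the time-derivative part of the expansion. The velocity of $Y$ contains $v\,\partial_t\mathbf n_N=-v\nabla V_N$ and the tangential contributions of $\partial_tX$. One has to check that after rescaling these enter only through the bounded quantities $\ell^2\nabla V_N$. Terms quadratic in $(v,\partial_tv)$, such as $|\nabla v|^2\partial_tv$ and $v\,\nabla v\cdot\nabla V_N$, must be assigned to $\cQ_N$ rather than to $P_N$ or $F_N$. I would handle this by computing $\langle\partial_tY,\mathbf n_M\rangle$ explicitly to first order and placing everything else in $\mathcal F$. I would then verify scale-invariance by noting that each term has the same parabolic homogeneity as $\partial_tv$.
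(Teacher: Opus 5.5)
Your proposal is correct and is essentially the paper's argument: on each cylinder you rescale by $\ell=\sqrt{t_0}$, use \eqref{eq:Ngeometry} to bound $\ell A_N,\ell V_N,\ell^2\nabla A_N,\ell^2\nabla V_N$, note that the rescaled height has $C_P^{2,\alpha}$ norm at most $CB\ell\rho^p\le CBT^{a/2}$, apply second-order vanishing of the smooth quasilinear remainder, and get embeddedness from $|v|\le CBT^{a/2}\rho$ and the tubular neighborhood of width $c\rho$. (The paper divides by $\langle\mathbf n_N,\mathbf n_{M_t}\rangle$ so that $\cQ_N$ contains no $\partial_tv$, which makes no difference.)

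There is one bookkeeping slip. The rescaled size of $(v-w)/\ell$ is $\norm{v-w}_{\cX_T}\,t_0\rho^p/\ell=\norm{v-w}_{\cX_T}\,\ell\rho^p$, not $t_0\rho^p/\ell^2$. With your expression, the later divisions by $\ell$ and $\rho^p$ would leave a non-uniform factor $\ell^{-1}$; the correct factor gives exactly \eqref{eq:nonlinearLip}.
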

\begin{proof}
Choose $X(\cdot,t)$ so that $\partial_tX=V_N\mathbf n_N$. Then $\partial_t\mathbf n_N=-\nabla_{N_t}V_N$, and differentiating the graph parametrization $X+v\mathbf n_N$ gives the velocity $(V_N+\partial_t v)\mathbf n_N-v\nabla_{N_t}V_N$. Taking its component along $\mathbf n_{M_t}$ gives
\begin{equation*}
 \partial_t v=
 \frac{\langle\vec H_{M_t},\mathbf n_{M_t}\rangle
    +v\langle\nabla V_N,\mathbf n_{M_t}\rangle}
   {\langle\mathbf n_N,\mathbf n_{M_t}\rangle}-V_N.
\end{equation*}
The normal graph formulas in \cite{CDHS} give $\Delta_N+|A_N|^2$ as the linearization of the mean curvature term. The denominator differs from one by $O(|\nabla v|^2)$, and the velocity correction is quadratic. Hence the linear part is $P_N$. On $Q_\theta(x_0,t_0)$, use the coordinates of Subsection~\ref{sec:weighted-norms} and put $z=v/\ell$, $\ell=\sqrt{t_0}$. By \eqref{eq:Ngeometry}, $\ell A_N$, $\ell V_N$, $\ell^2\nabla A_N$, and $\ell^2\nabla V_N$ have uniformly bounded H\"older norms in these coordinates. Furthermore
\begin{equation*}
 \norm z_{C_P^{2,\alpha}}
 \le CB\ell\rho(x_0,t_0)^p\le CBt_0^{a/2}.
\end{equation*}
The last inequality uses $p=a-1<0$ and $\rho(x_0,t_0)\ge\ell$. In these variables, the right side of the graph equation is a smooth function of $z,\nabla z,\nabla^2z$ and the geometric quantities just listed, linear in $\nabla^2z$. Its remainder after the linearization is quadratic and is locally Lipschitz from $C_P^{2,\alpha}$ to $C_P^{0,\alpha}$ with constant $CB\ell\rho(x_0,t_0)^p$, by the H\"older product estimate and smooth matrix inversion. The difference of the rescaled heights satisfies $\norm{(v-w)/\ell}_{C_P^{2,\alpha}}\le C\ell\rho(x_0,t_0)^p\norm{v-w}_{\cX_T}$. The original nonlinear remainder is $\ell^{-1}$ times the rescaled remainder. Thus rescaling and dividing by $\rho(x_0,t_0)^p$ gives $CB\ell\rho(x_0,t_0)^p\norm{v-w}_{\cX_T}\le CBt_0^{a/2}\norm{v-w}_{\cX_T}$. Taking the supremum proves \eqref{eq:nonlinearLip}. Also $|v|/\rho+|\nabla v|\le CBT^{a/2}$. For sufficiently small $BT^{a/2}$, the graph lies in the tubular neighborhood established above and is therefore embedded.
\end{proof}

\begin{proof}[Proof of Theorem~\ref{thm:forward}]
From \eqref{eq:innerdefect}--\eqref{eq:outerdefect} and \eqref{eq:Ynorm},
\begin{equation*}
 \norm{F_N}_{\cY_T}\le C_Fr_0^{-a}.
\end{equation*}
Indeed, after division by $\rho^p$, \eqref{eq:innerdefect} gives $O(\rho^{\alpha_0-a})$, and \eqref{eq:outerdefect} gives $O(r^{-a})$.

The constants $C_i,C_o,K_0,C_F,C_N$ may depend on the fixed geometric data, $\alpha,a,\theta,\sigma$, and the spectral gap, but are uniform in $r_0,T,\eps$ in the stated range. Put $K=2K_0$ and choose $B_0\ge2KC_F+1$. Next choose $0<\eps\le\min\{\eps_*,c_o,1\}$ so that $CB_0\eps^{a/2}$ is sufficiently small for Lemma~\ref{lem:nonlinear} and
\begin{equation*}
 C(C_i+C_o)(\sqrt\eps+\eps)<1/4,\qquad
 KC_NB_0\eps^{a/2}<1/4.
\end{equation*}
Finally choose $r_0>0$ small enough for the conical neighborhood and $CC_i r_0^{\alpha_0}<1/4$. Put $T=\eps r_0^2$ and $B=B_0r_0^{-a}$.
The right side of \eqref{eq:parametrixerror} is less than $1/2$, so $\norm{R_T}\le K$, and $BT^{a/2}=B_0\eps^{a/2}$. For $\norm v_{\cX_T}\le B$,
\begin{equation*}
 \norm{R_T\{F_N+\cQ_N(v)\}}_{\cX_T}
 \le KC_Fr_0^{-a}+KC_NB^2T^{a/2}\le\tfrac34B.
\end{equation*}
The map $v\mapsto R_T\{F_N+\cQ_N(v)\}$ therefore preserves the closed ball of radius $B$, with Lipschitz constant at most $KC_NBT^{a/2}<1/4$. Its fixed point solves \eqref{eq:graphPDE}. The resulting hypersurfaces are closed and embedded, and parabolic regularity makes the flow smooth for $t>0$.

On each compact subset away from the origin, the cutoff formula and \eqref{eq:eheight} give smooth convergence of $N_t$ to $M_0$. Together with \eqref{eq:Xpoint}, this shows that the local graph functions for $M_t$ converge to those for $M_0$ in $C^1$, with uniform Hessian bounds. Write the flow over a fixed plane as $u(y,t)$, with smooth initial trace $u_0$. It satisfies \cite{EH}
\begin{equation}\label{eq:planeMCF}
 u_t=a^{ij}(Du)u_{ij},\qquad
 a^{ij}(p)=\delta_{ij}-p_ip_j/(1+|p|^2).
\end{equation}
The Hessian bound gives $|u_t|\le C$, and spatial finite differences then give $|Du(y,t)-Du(y,s)|\le C|t-s|^{1/2}$ on a smaller patch. Thus the matrix $(a^{ij}(Du))$ is uniformly elliptic and its coefficients belong to $C_P^{0,\alpha}$ up to $t=0$. Local initial Schauder estimates \cite{Krylov}, applied to $u-u_0$, give $C_P^{2,\alpha}$ regularity up to zero, and repeated parabolic regularity gives smooth extension. All time derivatives are determined recursively by \eqref{eq:planeMCF} and $u_0$, so any incoming flow extending smoothly to this trace joins smoothly on the patch.

For any sufficiently large fixed $R$, the graph over $\sqrt t\,(E\cap B_R)$ lies in a ball of radius $O(\sqrt t)$. The normal correction is uniformly at most $CBt^{(1+a)/2}$. This proves the Hausdorff limit.

Under the parabolic rescaling $(y,t)\mapsto(\lambda y,\lambda^2t)$, \eqref{eq:hjets} shows that the height $\lambda^{-1}h(\lambda y,\lambda^2t)$ and its derivatives are $O(\lambda^{\alpha_0})$ on compact space-time subsets with $t>0$. The correction $v$ satisfies, on the same subsets,
\begin{equation*}
 \norm{\lambda^{-1}v(\lambda y,\lambda^2t)}_{C_P^{2,\alpha}}
 \le CB\lambda^a.
\end{equation*}
Thus the rescaled surfaces converge to $\sqrt{t}\,E$ in that norm. The graphical MCF equation and repeated interior parabolic regularity give the $C^\infty$ convergence in \eqref{eq:forwardmodel}.

The normal graph maps defining $N_t$ and $M_t$ have bounded Jacobians. For the portion of $M_t\cap B_R$ with base points on $N_t\cap B_{4r_0}$, its preimage in $E_t$ lies in $\sqrt t\,(E\cap B_{C(1+R/\sqrt t)})$. Smooth AC area growth gives
\begin{equation*}
 t^{n/2}\cH^n(E\cap B_{C(1+R/\sqrt t)})
 \le C(R^n+t^{n/2}).
\end{equation*}
For the portion of $M_t$ with base points on $N_t\setminus B_{4r_0}$, its graph representation over $M_0$ gives the same area bound, using \eqref{eq:trace} and the bounded graph Jacobians. This proves \eqref{eq:areabound}. This estimate, smooth convergence off zero, and common compact support imply varifold convergence and convergence of total area.
\end{proof}

\section{The incoming closed mean curvature flow and its limit}\label{sec:incoming}

Let $\Sigma$ be as in Theorem~\ref{thm:twosided}, and let $M_t^-$, $-1\le t<0$, be the closed mean curvature flow from the Euclidean construction of \cite[Theorem 1.2 and Proposition 5.17]{CLSZ} with zero forcing. We verify that its time-zero limit $M_0$ satisfies the hypotheses of Theorem~\ref{thm:forward}, in particular the power estimate \eqref{eq:trace}.

We retain the notation of \cite{CLSZ} and write $M_t=M_t^-$ in this section. Let $\widehat M_t$ denote the transplanted flow, $F$ the shrinker embedding, and $h$ the rescaled normal height of $\widehat M_t$. Put $\tau=-\log(-t)$ and let $\widetilde r\ge1$ be smooth, with $\widetilde r(x)=|F(x)|+1$ for $|F(x)|\ge1$. The transplanted flow is given by
\begin{equation}\label{eq:incominggraph}
 (-t)^{-1/2}\widehat M_t=\{F(x)+h(x,\tau)\mathbf n_F(x):x\in\Sigma\},
\end{equation}
and the pointwise bounds in \cite{CLSZ} give
\begin{equation}\label{eq:incomingbox}
 |h|\le\eps_0\widetilde r,\quad
 |\nabla h|\le\eps_1,\quad |\nabla^2h|\le\eps_2.
\end{equation}

The transplantation cutoff in \cite[Section 5]{CLSZ} equals one on a fixed inner region. The exterior velocity bounds in the proof of \cite[Proposition 5.7]{CLSZ} keep its complement outside a smaller ball for late times. For these times, the ball also contains the scaled graph of a fixed connected compact truncation of $\Sigma$. The small weighted $C^1$ bounds in \eqref{eq:incomingbox} preserve radial transversality beyond this truncation, so the portion of the transplanted graph in the ball is connected. Every point of $M_t$ in this ball comes from the region where the transplantation cutoff equals one. Thus the intersection of $M_t$ with the ball is a nonempty relatively open and closed subset of this connected portion of $\widehat M_t$, so the two flows agree in the ball. The estimates for $\widehat M_t$ therefore apply there to $M_t$. Together with the uniform exterior estimates in \cite[equation (2.14), Proposition 2.15 and Appendix B.1]{CLSZ}, these bounds give $\sup_{M_t}|A|\le C(-t)^{-1/2}+C$. The mean curvature is therefore integrable in time, so the flow parametrizations converge uniformly and $M_t$ has a compact Hausdorff limit $M_0$.

Let $\lambda_*\in(0,1/2)$ be the spectral parameter of the construction. By \cite[Theorem 4.13 and the proof of Proposition 5.7]{CLSZ}, there is $B>0$ such that, for all $\tau\ge0$,
\begin{align}
 |\nabla^\ell h|&\le B\widetilde r^{1+2\lambda_*-\ell}
      e^{-\lambda_*\tau},\qquad\ell=0,1,
      \label{eq:incomingpower}\\
 |\nabla^2h|&\le B\widetilde r^{2\lambda_*}
      e^{-\lambda_*\tau}.
      \label{eq:incominghessian}
\end{align}

Put $\Sigma_t=\sqrt{-t}\Sigma$ and $\alpha_0=2\lambda_*$. In a small annulus $r\asymp R$ and for $-cR^2<t<0$, equation~\eqref{eq:incomingpower} gives a normal height $k^-$ over $\Sigma_t$ satisfying
\begin{equation*}
 |k^-|\le CR^{1+\alpha_0},\qquad |\nabla_{\Sigma_t}k^-|\le CR^{\alpha_0}.
\end{equation*}
At corresponding points under dilation from $\Sigma$ to $\Sigma_t$, $k^-=\sqrt{-t}\,h$ and $|\nabla_{\Sigma_t}k^-|=|\nabla_\Sigma h|$. Substituting $\widetilde r\asymp R/\sqrt{-t}$ and $e^{-\lambda_*\tau}=(-t)^{\lambda_*}$ into \eqref{eq:incomingpower} cancels the time factors in both estimates.

For sufficiently small $R$ and $c$, $M_t$ and $\Sigma_t$ can locally be written as graphs over common fixed planar patches in $B_{2R}\setminus\overline B_{R/2}$ for $-cR^2<t<0$. Interior estimates \cite{EH} give uniform scaled derivative bounds on the smaller annulus $B_{3R/2}\setminus\overline B_{3R/4}$ for $-cR^2/2<t<0$. The functions $u,v$ representing the two flows as graphs over the same plane satisfy \eqref{eq:planeMCF}. Subtracting the equations satisfied by $u$ and $v$, we find that $w=u-v$ satisfies a homogeneous linear parabolic equation with uniform ellipticity and uniformly controlled coefficients in the rescaled coordinates. Since $\norm w_\infty\le CR^{1+\alpha_0}$, linear parabolic interior estimates \cite{Lieberman} give, on smaller patches,
\begin{equation*}
 |D^j\partial_t^m w|\le C_{jm}R^{1+\alpha_0-j-2m}.
\end{equation*}
These estimates also give smooth limits of the closed mean curvature flow on each fixed annulus. The smooth AC convergence in \cite{CLSZ} and the graphical MCF equation give a smooth extension of $\Sigma_t$ to $\cC$ at $t=0$ on these annuli. Letting $t\uparrow0$ and regraphing over $\cC$ proves all the bounds \eqref{eq:trace}. Overlapping graphs agree by uniqueness of normal projection to $\cC$.

The uniform exterior curvature and derivative bounds in \cite{CLSZ}, together with interior parabolic regularity, extend this smooth immersed convergence to every nonzero point of $M_0$.

There is no self-contact at time zero away from the origin. Transverse sheets at the limit would already intersect before zero. For tangent sheets, the preceding immersed convergence gives two graphs over a fixed ball for $t_1\le t<0$, with uniform derivative bounds. Embeddedness strictly orders them, so their difference $w$ is positive and satisfies a homogeneous uniformly parabolic linear equation with bounded coefficients. If their values at time zero agree at an interior point $y_0$, fix $t_a\in(t_1,0)$ sufficiently late. The interior Harnack inequality \cite{Lieberman} on a smaller fixed cylinder gives
\begin{equation*}
 0<w(y_0,t_a)\le Cw(y_0,t),\qquad t_a/4\le t<0,
\end{equation*}
with $C$ independent of $t$. Letting $t\uparrow0$ gives a contradiction. Hence $M_0\setminus\{0\}$ is embedded with one sheet.

Equations~\eqref{eq:incomingpower}--\eqref{eq:incominghessian} give local $C^2$ rescaled convergence to $\Sigma$. Interior estimates for MCF give smooth rescaled convergence. Nonflatness makes the origin singular, and every nonzero point is regular. For each sufficiently small fixed $r>0$ and $t<0$ sufficiently close to zero, the local graph representation \eqref{eq:incominggraph} and \eqref{eq:incomingbox} show that $M_t\cap\overline B_r$ is a normal graph over a connected compact truncation of $\Sigma_t$, with the cutting sphere transverse to every conical end. The annular curvature estimates and the curvature bound on $|x|\le\sqrt{|t|}$ give $|A|\le C/(\sqrt{|t|}+|x|)$. The same graph estimates, this curvature bound, and the tubular neighborhoods of $\Sigma_t$ show that the normal map is an embedding for $|z|<c(\sqrt{|t|}+|x|)$ in a fixed ball centered at zero. Huisken's monotonicity formula \cite{Huisken}, applied to the smooth closed initial surface, gives $\cH^n(M_t\cap B_R)\le CR^n$. If the chosen closed realization has other components, retain the component containing the connected truncated shrinker.

\section{Continuation through the singularity}\label{sec:weak}

Let $M_t^-$ be the incoming flow from Section~\ref{sec:incoming}, with limit $M_0$. Choose $r>0$ sufficiently small so that $\partial B_r$ is transverse to $M_0\setminus\{0\}$. Then every component of $M_0\setminus B_r$ meets the cutting sphere. Otherwise, smooth convergence with multiplicity one on a component without boundary would give a closed component of $M_t^-$ disjoint from $M_t^-\cap B_r$, contradicting connectedness. Theorem~\ref{thm:forward} supplies an outgoing flow $M_t^+$ from $M_0$ with the chosen expander $E$. In its construction, take the gluing radius $r_0$ so small that $8r_0<r$. The conical annuli join the connected truncated expander to every component of $M_0\setminus B_r$, so $N_t$ is connected for sufficiently small $t>0$. Since $M_t^+$ is a normal graph over $N_t$, it is also connected. Set
\begin{equation*}
 M_t=\begin{cases}
 M_t^-,&-1\le t<0,\\
 M_0,&t=0,\\
 M_t^+,&0<t\le T_E.
 \end{cases}
\end{equation*}
The family is smooth away from $(0,0)$ by the smooth extension of both flows to the regular part of $M_0$.

\subsection{Weak continuation}

Let $\mu_t=\cH^n\lfloor M_t$ and $\mu_0=\cH^n\lfloor(M_0\setminus\{0\})$. Both sides have $\mu_t(B_R)\le C(R^n+|t|^{n/2})$. Smooth convergence with multiplicity one off zero and this estimate imply varifold convergence and convergence of total area:
\begin{equation}\label{eq:masscontinuity}
 \mu_t\to\mu_0,\qquad
 A(t):=\mu_t(\R^{n+1})\longrightarrow A(0).
\end{equation}
The area identities for the smooth closed mean curvature flows on the two sides imply
\begin{equation}\label{eq:energy}
 \int_{-1}^{T_E}\int|\vec H|^2\dd\mu_t\dd t=A(-1)-A(T_E)<\infty.
\end{equation}
The spacetime $L^1$ norm of $\vec H$ is finite by Cauchy--Schwarz. For $-1\le a<b\le T_E$ and smooth compactly supported $\varphi\ge0$, integrate the smooth evolution identity on the portions of $[a,b]$ outside $(-\delta,\delta)$ and let $\delta\downarrow0$. Equations~\eqref{eq:masscontinuity}--\eqref{eq:energy} give
\begin{equation*}
 \mu_b(\varphi_b)-\mu_a(\varphi_a)
 =\int_a^b\int(\partial_t\varphi+\nabla\varphi\cdot\vec H
           -\varphi|\vec H|^2)\dd\mu_t\dd t.
\end{equation*}
Hence $(\mu_t)_{t\in[-1,T_E]}$ is an integral Brakke flow \cite{Brakke} in the integrated formulation of \cite{BK}. Hausdorff continuity identifies its spacetime support with the track of $M_t$, which is therefore a weak set flow \cite{BK}. Since $M_t$ is a smooth closed hypersurface for every $t\ne0$, the Brakke flow is cyclic \cite{WhiteCurrents}.

For $t\ne0$, let $\Omega_t$ be the bounded region enclosed by the connected closed hypersurface $M_t$. Smooth convergence with multiplicity one off zero preserves the parity of transverse crossings of a path from infinity to any $y\notin M_0$ avoiding zero; Hausdorff convergence excludes additional crossings away from $M_0$. Thus the characteristic functions on both sides have the same pointwise limit off $M_0$, denoted by $\chi_{\Omega_0}$. Common compact support gives
\begin{equation}\label{eq:phase}
 \chi_{\Omega_t}\longrightarrow\chi_{\Omega_0}\quad\hbox{in }L^1.
\end{equation}
Uniform area bounds and $L^1$ lower semicontinuity imply that $\Omega_0$ has finite perimeter; see \cite{Maggi}. Off zero its perimeter measure is $\mu_0$, and a finite-perimeter boundary has no mass at a point. With outward orientations, the boundary currents satisfy \cite[Section 2]{BWLS}
\begin{equation}\label{eq:slicecurrents}
 T_t=\partial[\![\Omega_t]\!],\qquad \norm{T_t}=\mu_t,
 \qquad \mathcal F(T_t-T_0)\le |\Omega_t\mathbin\triangle\Omega_0|\to0,
\end{equation}
where $\mathcal F$ is the flat norm. On each side of time zero, the spacetime region $\{(x,t):x\in\Omega_t\}$ has finite perimeter: its spatial and time variations are bounded by $\int A(t)\dd t$ and $\int\!\int|\vec H|\dd\mu_t\dd t$, respectively. The equal $L^1$ traces in \eqref{eq:phase} cancel the boundary terms at time zero, so the two regions join to a set of finite perimeter \cite{Maggi}. Let $\mathcal T$ be its oriented boundary current, with orientation chosen so that its almost-everywhere time slices in the convention of \cite{BWLS} are $T_t$. Current boundary identities are understood relative to the time interval. Its mass is
\begin{equation}\label{eq:trackmass}
 \mathbf M(\mathcal T)=\int_{-1}^{T_E}\int\sqrt{1+|\vec H|^2}\dd\mu_t\dd t<\infty.
\end{equation}
We use the flat-continuous family $T_t$ as the representative of the time slices of $\mathcal T$. By \eqref{eq:trackmass}, restrictions of $\mathcal T$ to time slabs converge in mass as their endpoints converge. Passing to the limit in the almost-everywhere slicing boundary identity \cite{Maggi} therefore gives the same identity at every time, with one-sided limits at $-1$ and $T_E$. Together with $\norm{T_t}=\mu_t$ in \eqref{eq:slicecurrents}, this gives a matching motion in the sense of \cite{BWLS,Ilmanen}.

The backward density at $(0,0)$ is the Gaussian area of $\Sigma$; the area growth estimates control the Gaussian tails when passing from the incoming rescalings to this density. The Gaussian integrals for the shrinking flow $(\sqrt{-t}\,\Sigma)_{t<0}$, based at a regular point, tend to $1$ at small scales and to the Gaussian area of $\Sigma$ at large scales. Huisken's monotonicity formula \cite{Huisken} shows that this area is at least $1$; equality and smoothness at the base point would force the flow to be a static hyperplane. Since $\Sigma$ is nonflat, the density is greater than $1$. Hence the flow is unit regular \cite{BK}.

\subsection{Almost regularity for surfaces}

Let $n=2$, and let $r_{\rm loc}$ denote the local scale defined in \cite[Definition~2.4]{BK}. Recall that $\rho(x,t)=\sqrt{|t|+|x|^2}$.

Near $(0,0)$, the estimates in Section~\ref{sec:incoming} give $|A|\le C\rho^{-1}$ and a normal map that is an embedding for $|z|<c_0\rho$ on the incoming side. The same bounds hold on the outgoing side by \eqref{eq:Ngeometry}, \eqref{eq:Xpoint}, and the smallness of the normal graph. Indeed, in the inner region, \eqref{eq:Xpoint} gives $|v|/\rho+|\nabla v|\le CB_0\eps^{a/2}$ and $\rho|\nabla^2v|\le CB\rho^a\le CB_0$. The bounds \eqref{eq:Ngeometry} for $N_t$ and these estimates give uniform $C^2$ bounds for local graphs of $M_t$ after scaling each neighborhood by the value of $\rho^{-1}$ at its center. On a sufficiently small scale, these bounds make the normal map injective within each graph patch; the tubular neighborhood of $N_t$ and the small $C^1$ graph bound exclude intersections between normals from different parts of the surface. Thus $M_t$ has an embedded tubular neighborhood of width $c_0\rho$ after decreasing $c_0$ if necessary, where $c_0$ may depend on the fixed $B_0$. Away from $(0,0)$ the flow is smooth on a compact spacetime set.

Fix a regular point $(x,t)$ near $(0,0)$, and put $\rho_0=\rho(x,t)$ and $R=c\rho_0$. For sufficiently small $c$, the cylinder $B_{8R}(x)\times([t-R^2,t+R^2]\cap[-1,T_E])$ avoids $(0,0)$, and $\rho$ is comparable to $\rho_0$ throughout it. Decreasing $c$ ensures $|A|\le10^{-2}R^{-1}$ throughout the cylinder.

For each $s$ in this time interval, let $x_s\in M_s$ be the point obtained from $x$ by normal parametrization. The normal velocity bound gives
\begin{equation*}
 |x_s-x|\le CR^2/\rho_0<R/10.
\end{equation*}
For each $s$, the curvature bound gives a graph through $x_s$ over $\overline B^2_{5R}(0)$ with slope at most $1/10$, in coordinates centered at $x$. Any point of $M_s\cap B_{2R}(x)$ outside this graph has a nearest point in the interior of the graph disk, and the connecting segment is normal and has length less than $3R$. For sufficiently small $c$, we have $3R<c_0\rho$ throughout the cylinder, so this contradicts injectivity of the normal map. Restricting the graph, we may therefore write $F_s(z)=(z,w_s(z))$ over $B^2_{3R}(0)$ to represent all of $M_s\cap B_{2R}(x)$, with
\begin{equation*}
 |Dw_s|\le1,\qquad |D^2w_s|\le C/\rho_0.
\end{equation*}
Since $x_s$ lies on the graph, the estimates $|x_s-x|<R/10$ and $|Dw_s|\le1$ give $|w_s|\le CR$ on the disk. Thus, for every unit vector $e$ in the coordinate plane,
\begin{equation*}
 \partial_e^2|F_s(z)|^2=2+2(\partial_e w_s)^2+2w_s\,\partial_e^2w_s\ge 2-CR/\rho_0\ge1
\end{equation*}
after decreasing $c$. The inverse image of $B_{R}(x)$ is therefore convex, so $M_s\cap B_{R}(x)$ is connected. The definition of $r_{\rm loc}$ now gives
\begin{equation}\label{eq:localscale}
 r_{\rm loc}(x,t)\ge c\sqrt{|t|+r^2}
\end{equation}
near $(0,0)$, with a positive lower bound on the compact regular part.

For fixed small $R>0$ and $t\ne0$, the area bound, the ball $B_{\sqrt{|t|}}$, and dyadic annuli give
\begin{equation}\label{eq:localintegral}
 \int_{M_t\cap B_R}(|t|+r^2)^{-1}\dd\mu_t\le C\bigl(1+\max\{0,\log(R^2/|t|)\}\bigr).
\end{equation}
The right side is integrable in time, so \eqref{eq:localscale}--\eqref{eq:localintegral} imply local spacetime integrability of $r_{\rm loc}^{-2}$. Almost every slice is smooth, the genus is constant on each smooth half of the flow, and the Brakke flow is unit regular. Thus the flow is almost regular by \cite[Definition~2.6]{BK}.

Now we consider the change of genus through the singularity.

Choose a small $r>0$ with $2r<r_*$ such that $\partial B_r$ is transverse to $M_0\setminus\{0\}$, and set $X=M_0\setminus B_r$. For $t_-<0<t_+$ sufficiently close to zero, smooth convergence up to $\partial B_r$ identifies $M_{t_-}^-\setminus B_r$ and $M_{t_+}^+\setminus B_r$ diffeomorphically with $X$.

For the outgoing flow, fix a sufficiently large $R\ge6R_E$. By the cutoff formula, \eqref{eq:trace}, and \eqref{eq:eheight}, the part of $N_t$ between radii $R\sqrt t$ and $2r$ is a graph over $\cC$, with its height divided by the radius and its gradient bounded by $C(r^{\alpha_0}+R^{-2})$. On this region, \eqref{eq:Xpoint} gives $|v|/|x|+|\nabla v|\le CB R^{a-1}t^{a/2}$, which tends to zero as $t\downarrow0$. Taking $r$ small, $R$ large, and then $t$ small, the part of $M_t^+$ between radii $2R\sqrt t$ and $r$ is therefore a small normal graph over $\cC$ and consists of annuli transverse to the concentric spheres. Since $E$ is transverse to $\partial B_{2R}$, the small $C^1$ graph bounds in \eqref{eq:hjets} and \eqref{eq:Xpoint} show that $M_t^+\cap\overline B_{2R\sqrt t}$ is diffeomorphic to $E\cap\overline B_{2R}$ for sufficiently small $t$. Attaching these annuli shows that $M_t^+\cap\overline B_r$ has the same diffeomorphism type. The incoming identification with a compact truncation of $\Sigma$ follows in the same way from the global graph representation \eqref{eq:incominggraph} and the bound \eqref{eq:incomingpower}.

These truncations are connected and have the same number of boundary circles, determined by $\cC\cap S^2$. The common exterior contribution $\chi(X)$ cancels, even when $X$ is disconnected. Additivity of Euler characteristic therefore gives
\begin{equation*}
 \chi(M_{t_+}^+)-\chi(M_{t_-}^-)
 =-2\bigl(\genus(E)-\genus(\Sigma)\bigr).
\end{equation*}
Since both closed surfaces are connected, this gives \eqref{eq:genusjump}.

This proves Theorem~\ref{thm:twosided}.

\section{Riemannian ambient manifolds}\label{sec:riemannian}

\begin{proof}[Proof of Theorem~\ref{thm:riemannian}]
We describe only the changes to Sections~\ref{sec:forward}--\ref{sec:weak}. Use the same weights as before, with $r=|x|=d_g(\mathfrak p,x)$ in $U$ and $r^2$ extended smoothly and positively outside $U$ on a fixed relatively compact neighborhood of $M_0$. On $N_t$ and $S_{r_0}$, define $\cX_T,\cY_T$ using $g$-geodesic coordinates and $g$-normal parametrizations. On $E_t$, retain $r=|y|$, $\rho(y,t)=\sqrt{t+|y|^2}$, and the Euclidean norms and operators. Constants may depend on $g$ and the weight extension on this neighborhood, but remain uniform in sufficiently small $r_0,T$.

Construct $N_t$ by the same cutoff formula in $U$, and use normal exponential graphs $\exp_x^g(v\mathbf n)$, where $\mathbf n$ is a local $g$-unit normal. Heights on one-sided components are normal line bundle sections; pullbacks identify local unit normal frames.

As in \cite[Section 2.3 and Appendix B.1]{CLSZ}, the additional force in normal coordinates is
\begin{equation*}
 \phi_0[X,t]=\langle\vec H_g(X)-\vec H_{\mathrm{euc}}(X),
                         \mathbf n_X\rangle_{\mathrm{euc}},
\end{equation*}
where $\mathbf n_X$ is a local Euclidean unit normal. For $X$ parametrizing $N_t$, the metric coefficients in normal coordinates differ from the Euclidean ones by $O(r^2)$, and their first coordinate derivatives are $O(r)$. Together with \eqref{eq:Ngeometry}, these bounds give
\begin{equation}\label{eq:riemannianforcing}
 |\nabla^j\partial_t^m\phi_0[X,t]|\le C_{jm}\rho^{1-j-2m}\quad(r<4r_0),\qquad
 \norm{\phi_0[X,t]}_{\cY_T(r<4r_0)}\le Cr_0^{2-a}.
\end{equation}
Indeed, the mean-curvature difference has terms of size $O(r^2)|A_N|+O(r)\le C\rho$. Dividing by $\rho^p$ gives $\rho^{1-p}=\rho^{2-a}\le Cr_0^{2-a}$, since $p=a-1$ and $\rho\le Cr_0$ on this region for $T\le r_0^2$.
Let $\mathbf n_{N_t,g}$ be the corresponding $g$-unit normal. Then
\begin{equation*}
 F_N^g:=\langle\vec H_g-\partial_tX,\mathbf n_{N_t,g}\rangle_g
 =\frac{F_N+\phi_0[X,t]}
 {\langle\mathbf n_{N_t,g},\mathbf n_{N_t}\rangle_{\mathrm{euc}}}.
\end{equation*}
Since the denominator is $1+O(r^2)$ and $|F_N|\le C\rho^{-1}$, the difference $F_N^g-F_N$ is bounded by $C(\rho+r^2/\rho)\le C\rho$. Hence $\norm{F_N^g-F_N}_{\cY_T(r<4r_0)}\le Cr_0^{2-a}$. The linearized operator is
\begin{equation*}
 P_N^g=\partial_t-\Delta_{N_t,g}-|A_g|^2
       -\operatorname{Ric}_g(\mathbf n_{N_t,g},\mathbf n_{N_t,g}).
\end{equation*}
On $r<4r_0$, the coefficient differences at orders two, one, and zero have sizes $O(r^2)$, $O(r+r^2/\rho)$, and $O(r^2/\rho^2+r/\rho+1)$, respectively; the last bound includes the Ricci term. Since $r\le\rho$, this gives
\begin{equation}\label{eq:riemanniancoeff}
 P_N^g-P_N=O(\rho^2)\nabla^2+O(\rho)\nabla+O(1).
\end{equation}
The time derivatives in $P_N^g$ and $P_N$ use the $g$-normal and Euclidean normal parametrizations, respectively. Their velocity vectors differ by a Euclidean tangential vector bounded by $Cr^2|V_N|\le Cr^2/\rho$. Changing the normal parametrization therefore adds only a first-order term already included above. For $\norm v_{\cX_T}\le B$, \eqref{eq:Xpoint} gives
\begin{equation*}
 \frac{|(P_N^g-P_N)v|}{\rho^p}
 \le CB(\rho^2+\rho\sqrt t+t)\le CB r_0^2
\end{equation*}
on the inner cutoff support. In the differentiated geometric estimates, each spatial derivative introduces at most a factor $\rho^{-1}$ and each time derivative a factor $\rho^{-2}$. Rescaling multiplies these derivatives by $\sqrt{t_0}$ and $t_0$, respectively. Since $\sqrt{t_0}\le\rho(x_0,t_0)$, the force and coefficient bounds also hold in the scaled $C_P^{0,\alpha}$ norms. Thus the operator error is bounded by $Cr_0^2$ from $\cX_T$ to $\cY_T$ on this support.

On $S_{r_0}$ replace $P_S$ by
\begin{equation*}
 P_S^g=\partial_t-\Delta_{S_{r_0},g}-|A_{S_{r_0},g}|^2
            -\operatorname{Ric}_g(\mathbf n_{S_{r_0},g},\mathbf n_{S_{r_0},g}).
\end{equation*}
Bounded ambient geometry on the fixed compact neighborhood preserves the barrier $2t\widetilde\rho^p$ and Proposition~\ref{prop:outerinverse}; for normal line bundle sections, compare the norm of the solution. Regraphing by $\exp^g$ preserves \eqref{eq:kjets1}--\eqref{eq:kjets2} and \eqref{eq:outerdefect}. The Gauss lemma gives tangential velocity $O(|\partial_tk||\nabla k|)=O(t/r^3)$, so \eqref{eq:outercoeff} and its scaled H\"older bounds persist. Time derivatives retain the small factor after multiplication by $t_0\asymp t$.

Taking $R_o$ to be the zero-initial-data inverse of $P_S^g$, the same cutoff construction gives
\begin{equation}\label{eq:riemannianparametrix}
 \norm{P_N^gQ-\Id}_{\cY_T\to\cY_T}
 \le CC_i(r_0^{\alpha_0}+r_0^2)
       +C(C_i+C_o)(\sqrt\eps+\eps),
 \qquad T=\eps r_0^2.
\end{equation}
The source bound remains $\norm{F_N^g}_{\cY_T}\le C_Fr_0^{-a}$. For exponential graphs, the normal-velocity variation is $\partial_t v$ and the mean-curvature variation is the Jacobi operator. Uniform scaled ambient curvature bounds therefore preserve \eqref{eq:nonlinearLip}. Choose $\eps$ as before and decrease $r_0$ further so that $CC_i(r_0^{\alpha_0}+r_0^2)<1/4$; the same contraction argument applies. The remaining conclusions follow from the proof of Theorem~\ref{thm:forward} using the graphical MCF equation for $g$. The preceding graph estimates and the smooth convergence of the rescaled metrics to the Euclidean metric give \eqref{eq:riemannianmodel}.

Choose the incoming flow from \cite[Theorem 1.4 and Proposition 2.15]{CLSZ}. Its pointwise bounds and exterior estimates allow the cutoff and connectedness argument of Section~\ref{sec:incoming}, so \eqref{eq:incomingpower}--\eqref{eq:incominghessian} apply in a fixed inner ball. On $r\asymp R$, write the incoming flow and $\Sigma_t$ as graphs $u,v$ over a common plane. For this local application of pseudolocality, extend the coordinate metric to a complete bounded-geometry metric as in \cite[Appendix B.1]{CLSZ}, leaving it unchanged near the incoming flow. The pseudolocality theorem of Bing-Long Chen and Le Yin \cite[Theorem 1.4]{CY}, followed by interior derivative estimates, gives uniform scaled bounds on smaller patches after a fixed fraction of $R^2$. The difference $w=u-v$ satisfies
\begin{equation*}
 w_t=a_g^{ij}(y,u,Du)w_{ij}+B^iw_i+Cw
       +\sqrt{1+|Dv|^2}\,\phi_0[(y,v),t].
\end{equation*}
This equation is uniformly parabolic, with uniformly controlled coefficients in the rescaled coordinates. The force $\phi_0$ is evaluated on the graph $y\mapsto(y,v(y,t))$ with the upward Euclidean unit normal. The model and metric estimates bound the derivatives of the last term by $C_{jm}R^{1-j-2m}$. With $\norm w_\infty\le CR^{1+\alpha_0}$ and $\alpha_0=2\lambda_*<1$, linear interior estimates yield
\begin{equation*}
 |D^j\partial_t^mw|
 \le C_{jm}(R^{1+\alpha_0}+R^3)R^{-j-2m}
 \le C_{jm}R^{1+\alpha_0-j-2m}.
\end{equation*}
The time-zero limit therefore satisfies \eqref{eq:trace}. Embeddedness follows from the same Harnack argument on actual sheets, allowing a bounded zeroth-order term.

For the continuation through $t=0$, the maximum principle for $r^2$ keeps the incoming flow in one coordinate ball, as in \cite[Appendix B.1]{CLSZ}; Hausdorff convergence keeps the outgoing flow there for small $T_E$. With $\mu_t=\cH_g^n\lfloor M_t$, AC area growth and the graph estimates give $\mu_t(B_R^g(\mathfrak p))\le C(R^n+|t|^{n/2})$. This bound and smooth convergence away from $\mathfrak p$ give continuity of total $g$-area at zero. On each smooth half of the flow,
\begin{equation*}
 \frac{d}{dt}\operatorname{Area}_g(M_t)
       =-\int_{M_t}|\vec H_g|_g^2\dd\mu_t.
\end{equation*}
Thus $|\vec H_g|_g^2$ is integrable in spacetime, and the localized evolution identity passes through zero as in Section~\ref{sec:weak}, giving an integral Brakke flow. The arguments for the enclosed regions and currents apply in this coordinate ball with $g$-volume and the coordinate orientation, giving a matching motion. By \cite{HW}, the spacetime support of the Brakke flow is a weak set flow. Since the rescaled metrics converge smoothly to the Euclidean metric, the singular density is the Gaussian area of $\Sigma$, greater than one. All other points are regular, so the flow is unit regular.

For $n=2$, the graph bounds give $|A_g|\le C\rho^{-1}$ and an embedded tubular neighborhood of width $c\rho$. As in Section~\ref{sec:weak}, this neighborhood excludes additional sheets in the nearest-point argument, now using $g$-normal geodesics. In normal coordinates centered at $x$, the Hessian of the squared distance from $x$ in the planar graph parameters is bounded below by $(2-CR/\rho(x,t)-CR^2)I$, positive for $R=c\rho(x,t)$ with small $c$. Hence geodesic-ball intersections are connected, and \eqref{eq:localscale}--\eqref{eq:localintegral} give almost regularity. The genus formula follows from the same Euler characteristic calculation with the common exterior.
\end{proof}

\bibliographystyle{amsalpha}
\bibliography{main}
\end{document}